\newcommand{\nc}{\newcommand}
\nc{\fg}{\mathfrak{f} } \nc{\vg}{\mathfrak{v} } \nc{\wg}{\mathfrak{w} }
\nc{\zg}{\mathfrak{z} } \nc{\ngo}{\mathfrak{n} } \nc{\kg}{\mathfrak{k} }
\nc{\mg}{\mathfrak{m} } \nc{\bg}{\mathfrak{b} } \nc{\ggo}{\mathfrak{g} }
\nc{\ggob}{\overline{\mathfrak{g}} } \nc{\sog}{\mathfrak{so} }
\nc{\sug}{\mathfrak{su} } \nc{\spg}{\mathfrak{sp} } \nc{\slg}{\mathfrak{sl} }
\nc{\glg}{\mathfrak{gl} } \nc{\cg}{\mathfrak{c} } \nc{\rg}{\mathfrak{r} }
\nc{\hg}{\mathfrak{h} } \nc{\tg}{\mathfrak{t} } \nc{\ug}{\mathfrak{u} }
\nc{\dg}{\mathfrak{d} } \nc{\ag}{\mathfrak{a} } \nc{\pg}{\mathfrak{p} }
\nc{\sg}{\mathfrak{s} } \nc{\affg}{\mathfrak{aff} }
\nc{\pca}{\mathcal{P}} \nc{\nca}{\mathcal{N}} \nc{\lca}{\mathcal{L}}
\nc{\oca}{\mathcal{O}} \nc{\mca}{\mathcal{M}} \nc{\tca}{\mathcal{T}}
\nc{\aca}{\mathcal{A}} \nc{\cca}{\mathcal{C}} \nc{\gca}{\mathcal{G}}
\nc{\sca}{\mathcal{S}} \nc{\hca}{\mathcal{H}} \nc{\bca}{\mathcal{B}}
\nc{\dca}{\mathcal{D}} \nc{\val}{\operatorname{val}}
\nc{\vp}{\varphi} \nc{\ddt}{\frac{d}{dt}} \nc{\dds}{\frac{d}{ds}}
\nc{\dpar}{\frac{\partial}{\partial t}} \nc{\im}{\mathtt{i}}
\nc{\SO}{\mathrm{SO}} \nc{\Spe}{\mathrm{Sp}} \nc{\Sl}{\mathrm{SL}}
\nc{\SU}{\mathrm{SU}} \nc{\Or}{\mathrm{O}} \nc{\U}{\mathrm{U}} \nc{\Gl}{\mathrm{GL}}
\nc{\Se}{\mathrm{S}} \nc{\Cl}{\mathrm{Cl}} \nc{\Spein}{\mathrm{Spin}}
\nc{\Pin}{\mathrm{Pin}} \nc{\G}{\mathrm{GL}_n(\RR)} \nc{\g}{\mathfrak{gl}_n(\RR)}
\nc{\RR}{{\Bbb R}} \nc{\HH}{{\Bbb H}} \nc{\CC}{{\Bbb C}} \nc{\ZZ}{{\Bbb Z}}
\nc{\FF}{{\Bbb F}} \nc{\NN}{{\Bbb N}} \nc{\QQ}{{\Bbb Q}} \nc{\PP}{{\Bbb P}}
\nc{\vs}{\vspace{.2cm}} \nc{\vsp}{\vspace{1cm}} \nc{\ip}{\langle\cdot,\cdot\rangle}
\nc{\ipp}{(\cdot,\cdot)} \nc{\la}{\langle} \nc{\ra}{\rangle} \nc{\unm}{\tfrac{1}{2}}
\nc{\unc}{\tfrac{1}{4}} \nc{\und}{\tfrac{1}{16}} \nc{\no}{\vs\noindent}
\nc{\lam}{\Lambda^2(\RR^n)^*\otimes\RR^n} \nc{\tangz}{{\rm T}^{\rm Zar}}
\nc{\nor}{{\sf n}}  \nc{\mum}{/\!\!/} \nc{\kir}{/\!\!/\!\!/}
\nc{\Ri}{\tfrac{4\Ric_{\mu}}{||\mu||^2}} \nc{\ds}{\displaystyle}
\nc{\ben}{\begin{enumerate}} \nc{\een}{\end{enumerate}} \nc{\f}{\frac}
\nc{\lb}{[\cdot,\cdot]} \nc{\isn}{\tfrac{1}{||v||^2}}
\nc{\gkp}{(\ggo=\kg\oplus\pg,\ip)} \nc{\ukh}{(\ug=\kg\oplus\hg,\ip)}
\nc{\tgkp}{(\tilde{\ggo}=\kg\oplus\pg,\ip)}
\nc{\wt}{\widetilde} \nc{\mm}{M}
\nc{\Hess}{\operatorname{Hess}} \nc{\ad}{\operatorname{ad}}
\nc{\Ad}{\operatorname{Ad}} \nc{\rank}{\operatorname{rank}}
\nc{\Irr}{\operatorname{Irr}} \nc{\End}{\operatorname{End}}
\nc{\Aut}{\operatorname{Aut}} \nc{\Inn}{\operatorname{Inn}}
\nc{\Der}{\operatorname{Der}} \nc{\Ker}{\operatorname{Ker}}
\nc{\Iso}{\operatorname{I}} \nc{\Diff}{\operatorname{Diff}}
\nc{\Lie}{\operatorname{L}} \nc{\tr}{\operatorname{tr}} \nc{\dif}{\operatorname{d}}
\nc{\sen}{\operatorname{sen}} \nc{\modu}{\operatorname{mod}}
\nc{\CRic}{\operatorname{PP}} \nc{\Cric}{\operatorname{P}} \nc{\Ricci}{\operatorname{Ric}}
\nc{\sym}{\operatorname{sym}} \nc{\symac}{\operatorname{sym^{ac}}}
\nc{\symc}{\operatorname{sym^{c}}} \nc{\scalar}{\operatorname{sc}}
\nc{\grad}{\operatorname{grad}} \nc{\ricci}{\operatorname{Rc}}
\nc{\Nor}{\operatorname{Norm}}  \nc{\ricc}{\operatorname{Rc^{c}}} \nc{\Ricc}{\operatorname{Ric^{c}}} \nc{\ricac}{\operatorname{Rc^{ac}}} \nc{\Ricac}{\operatorname{Ric^{ac}}} \nc{\Riem}{\operatorname{Rm}}
\nc{\riccig}{\operatorname{ric^{\gamma}}} \nc{\Rin}{\operatorname{M}}
\nc{\Le}{\operatorname{L}} \nc{\tang}{\operatorname{T}}
\nc{\level}{\operatorname{level}} \nc{\rad}{\operatorname{r}}
\nc{\abel}{\operatorname{ab}} \nc{\CH}{\operatorname{CH}}
\nc{\mcc}{\operatorname{mcc}} \nc{\Adj}{\operatorname{Adj}}
\nc{\Order}{\operatorname{O}}  \nc{\inj}{\operatorname{inj}} \nc{\proy}{\operatorname{pr}}
\nc{\vol}{\operatorname{vol}} \nc{\Diag}{\operatorname{Diag}}
\theoremstyle{plain}
\newtheorem{theorem}{Theorem}[section]
\newtheorem{proposition}[theorem]{Proposition}
\newtheorem{corollary}[theorem]{Corollary}
\newtheorem{lemma}[theorem]{Lemma}
\theoremstyle{definition}
\newtheorem{definition}[theorem]{Definition}
\theoremstyle{remark}
\newtheorem{remark}[theorem]{Remark}
\newtheorem{example}[theorem]{Example}
\title[]{Curvature flows for almost-hermitian Lie groups}
\author{Jorge Lauret}
\address{Universidad Nacional de C\'ordoba, FaMAF and CIEM, 5000 C\'ordoba, Argentina}
\email{lauret@famaf.unc.edu.ar}
\thanks{This research was partially supported by grants from CONICET, FONCYT and SeCyT (Universidad Nacional de C\'ordoba)}
\begin{document}

\maketitle

\begin{abstract}
We study curvature flows in the locally homogeneous case (e.g. compact quotients of Lie groups, solvmanifolds, nilmanifolds) in a unified way, by considering a generic flow under just a few natural conditions on the broad class of almost-hermitian structures.  As a main tool, we use an ODE system defined on the variety of $2n$-dimensional Lie algebras, called the {\it bracket flow}, whose solutions differ from those to the original curvature flow by only pull-back by time-dependent diffeomorphisms.  The approach, which has already been used to study the Ricci flow on homogeneous manifolds, is useful to better visualize the possible pointed limits of solutions, under diverse rescalings, as well as to address regularity issues.  Immortal, ancient and self-similar solutions arise naturally from the qualitative analysis of the bracket flow.  The Chern-Ricci flow and the symplectic curvature flow are considered in more detail.
\end{abstract}



\section{Introduction}\label{intro}

The idea of evolving geometric structures to study them, or their underlying manifolds, is quite old.  However, it has been the resolution by Perelman of the Poincar\'e and Thurston Geometrization Conjectures by using Hamilton's Ricci flow which seems to have placed this approach among the most active topics in differential geometry in the last decade.

More recently, there have appeared in the literature many promising proposals to adapt the Ricci flow machinery to complex and symplectic geometry, all of which coincide with the K\"ahler-Ricci flow when the starting structure happens to be K\"ahler.  Among them, we have the following flows for hermitian metrics on a fixed complex manifold: the {\it hermitian curvature flow} (see \cite{StrTn1}), called {\it pluriclosed flow} in the SKT case (see \cite{StrTn4,StrTn3}), and the {\it Chern-Ricci flow} (see \cite{TstWnk}).  On the wider class of almost-hermitian manifolds, a large family of curvature flows associated to the Chern connection was studied in \cite{StrTn2}, which includes the gradient flow introduced
in \cite{Vzz1}.  The {\it symplectic curvature flow} introduced in \cite{StrTn2} evolves almost-K\"ahler manifolds, and coincides with the {\it anti-complexified Ricci flow} studied in \cite{LeWng} when the symplectic structure remains fixed (i.e. Chern-Ricci flat case).

All these flows are defined by evolution equations of the form
\begin{equation}\label{intro1}
\left\{\begin{array}{l}
\dpar\omega=-2p, \\ \\
\dpar g=-2q,
\end{array}\right.
\end{equation}
for a one-parameter family of almost-hermitian structures $(\omega(t),g(t),J(t))$ on a given differentiable manifold $M$, where $p=p(\omega,g)\in\Lambda^2M$ is a $2$-form and $q=q(\omega,g)\in\sca^2M$ is a symmetric $2$-tensor (both diffeomorphism invariant), which are curvature tensors associated to some hermitian connection of the structure $(\omega,g)$.  In this paper, we call equation \eqref{eq} the $(p,q)$-{\it flow}, in order to study them in a unified way in the (locally) homogeneous case.  The compatibility of the solution $(\omega(t),g(t))$ is equivalent to
$$
q^{1,1}=p^{1,1}(\cdot,J\cdot), \qquad\forall t,
$$
and it follows from the formula $\omega=g(J\cdot,\cdot)$ that the evolution of $J$ is given by
$$
\dpar J=-2J(P^{ac}+ Q^{ac}),
$$
where $P,Q\in\End(TM)$ are defined by $p=\omega(P\cdot,\cdot)$, $q=g( Q\cdot,\cdot)$ and $A^{ac}:=\unm(A+JAJ)$.

Starting with the pioneer article \cite{IsnJck}, the role of (locally) homogeneous manifolds in Ricci flow theory has been important, not only in inspiring some conjectures which ended up being true in the general case, but also in providing counterexamples to some others.  We expect that Lie groups may provide an even more useful framework in the evolution of almost-hermitian structures, due to the lack of explicit examples for many concepts and behaviors in complex and symplectic geometry.

Our aim in this paper is to study the $(p,q)$-flow evolution of compact almost-hermitian manifolds $(M,\omega,g)$ whose universal cover is a Lie group $G$ and such that if $\pi:G\longrightarrow M$ is the covering map, then $\pi^*\omega$ and $\pi^*g$ are left-invariant.  This is in particular the case of invariant structures on a quotient $M=G/\Gamma$, where $\Gamma$ is a cocompact discrete subgroup of $G$ (e.g. solvmanifolds and nilmanifolds).  A $(p,q)$-flow solution on $M$ is therefore obtained by pulling down the corresponding $(p,q)$-flow solution on the Lie group $G$, which by diffeomorphism invariance stays left-invariant.  Any $(p,q)$-flow therefore becomes an ODE for a pair $(\omega(t),g(t))$, where $\omega(t)$ is a non-degenerate $2$-form on the Lie algebra $\ggo$ of $G$, $g(t)$ is an inner product on $\ggo$, $p=p(\omega,g)\in\Lambda^2\ggo^*$ and  $q=q(\omega,g)\in\sca^2\ggo^*$.  Thus short-time existence (forward and backward) and uniqueness of the solutions are always guaranteed.

\subsection*{Bracket flow} Given a left-invariant almost-hermitian structure $(\omega_0,g_0)$ on a simply connected Lie group $G$, one has that
$$
(\omega,g)=h^*(\omega_0,g_0):=\left(\omega_0(h\cdot,h\cdot),g_0(h\cdot,h\cdot)\right),
$$
is also almost-hermitian for any $h\in\Gl(\ggo)$, and conversely, any almost-hermitian structure on $\ggo$ is of this form.  Moreover, the corresponding Lie group isomorphism
$$
\widetilde{h}:(G,\omega,g)\longrightarrow (G_\mu,\omega_0,g_0), \qquad\mbox{where}\qquad \mu=h\cdot\lb:=h[h^{-1}\cdot,h^{-1}\cdot],
$$
is an equivalence of almost-hermitian structures.  Here $\lb$ denotes the Lie bracket of the Lie algebra $\ggo$ and so $\mu$ defines a new Lie algebra (isomorphic to $(\ggo,\lb)$) with same underlying vector space $\ggo$.  We denote by $G_\mu$ the simply connected Lie group with
Lie algebra $(\ggo,\mu)$.  As in the case of Ricci flow (see \cite{nilricciflow,homRF}), this parametrization of left-invariant almost-hermitian structures as points in the variety $\lca_{2n}$ of $2n$-dimensional Lie algebras suggests the following natural question: how does a $(p,q)$-flow look on $\lca_{2n}$?

We consider for a family $\mu(t)\in \Lambda^2\ggo^*\otimes\ggo$ of brackets the following evolution equation:
\begin{equation}\label{intro2}
\ddt\mu=\delta_\mu(P_\mu+Q_\mu^{ac}), \qquad\mu(0)=\lb,
\end{equation}
where $P_\mu,Q_\mu\in\End(\ggo)$ are the curvature tensors corresponding to the almost-hermitian manifold $(G_\mu,\omega_0,g_0)$ and $\delta_\mu:\End(\ggo)\longrightarrow\Lambda^2\ggo^*\otimes\ggo$ is defined by
$$
\delta_\mu(A):=\mu(A\cdot,\cdot)+\mu(\cdot,A\cdot)-A\mu(\cdot,\cdot) = -\ddt|_{t=0} e^{tA}\cdot\mu, \qquad\forall A\in\End(\ggo).
$$
The variety $\lca_{2n}$ is invariant under equation \eqref{intro2}, which will be called the {\it $(p,q)$-bracket flow}, and our first result (see Section \ref{sec-BF}) shows that any $(p,q)$-flow is equivalent, in a precise way, to its corresponding $(p,q)$-bracket flow.

\begin{theorem}\label{intro3}
For a given simply connected almost-hermitian Lie group $(G,\omega_0,g_0)$ with Lie algebra $\ggo$, consider the families of almost-hermitian Lie groups
$$
(G,\omega(t),g(t)), \qquad (G_{\mu(t)},\omega_0,g_0),
$$
where $(\omega(t),g(t))$ is the solution to the $(p,q)$-flow starting at $(\omega_0,g_0)$ and $\mu(t)$ is the $(p,q)$-bracket flow solution starting at the Lie bracket $\lb$ of $\ggo$.  Then there exist Lie group isomorphisms $h(t):G\longrightarrow G_{\mu(t)}$ such that
$$
(\omega(t),g(t))=h(t)^*(\omega_0,g_0), \qquad\mbox{and}\qquad \mu(t)=h(t)\cdot\lb, \qquad\forall t.
$$
\end{theorem}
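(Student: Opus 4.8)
The plan is to establish the equivalence by a standard ODE-uniqueness argument, of the same flavor as the one relating the Ricci flow to the bracket flow in \cite{nilricciflow,homRF}, adapted to the almost-hermitian setting. First I would recall how an isomorphism $h\in\Gl(\ggo)$ acts: if $\mu=h\cdot\lb$, then $\widetilde h:(G,h^*(\omega_0,g_0))\to(G_\mu,\omega_0,g_0)$ is an equivalence of almost-hermitian manifolds, so the curvature endomorphisms transform equivariantly, namely $P_{h\cdot\lb}=hP_{(\lb,h^*(\omega_0,g_0))}h^{-1}$ and likewise for $Q$ and $Q^{ac}$ (the operation $A\mapsto A^{ac}=\unm(A+JAJ)$ commutes with conjugation by $h$ precisely because $\widetilde h$ intertwines the two complex structures). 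Here I write $P_{(\mu,\omega,g)}$, $Q_{(\mu,\omega,g)}$ for the curvature tensors of $(G_\mu,\omega,g)$, so that $P_\mu=P_{(\mu,\omega_0,g_0)}$.

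Next I would set up the two candidate families. Let $\mu(t)$ be the $(p,q)$-bracket flow solution, which exists for $t$ in some maximal interval around $0$ by the standard ODE theory, since the right-hand side $\delta_\mu(P_\mu+Q_\mu^{ac})$ depends smoothly (indeed polynomially, up to the inverses coming from raising/lowering indices with $g_0$ and from $\omega_0^{-1}$) on $\mu$. Define $h(t)\in\Gl(\ggo)$ as the solution to the linear-in-$h$ ODE
\begin{equation*}
\ddt h = -h\,(P_{\mu(t)}+Q_{\mu(t)}^{ac}), \qquad h(0)=\mathrm{Id},
\end{equation*}
which exists on the same interval. Then I would show two things: (i) $h(t)\cdot\lb=\mu(t)$ for all $t$, and (ii) the pulled-back structures $(\omega(t),g(t)):=h(t)^*(\omega_0,g_0)$ solve the $(p,q)$-flow. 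For (i), differentiate $t\mapsto h(t)\cdot\lb$ using the chain rule for the $\Gl(\ggo)$-action: one gets $\ddt(h\cdot\lb)=\delta_{h\cdot\lb}(-\ddt h\cdot h^{-1})=\delta_{h\cdot\lb}(P_{\mu(t)}+Q_{\mu(t)}^{ac})$, where I have used the identity $\delta_\mu(A)=-\ddt|_{0}e^{tA}\cdot\mu$ stated in the excerpt. Since at $t=0$ both $h\cdot\lb$ and $\mu$ equal $\lb$, and since $P_\mu,Q_\mu^{ac}$ evaluated along $h\cdot\lb$ agree with those along $\mu$ as long as the two curves coincide, uniqueness of solutions to \eqref{intro2} forces $h(t)\cdot\lb=\mu(t)$ throughout. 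For (ii), I would differentiate $h(t)^*(\omega_0,g_0)$: using $\ddt(h^*\omega_0)(X,Y)=\omega_0(\dot h X,hY)+\omega_0(hX,\dot h Y)$ and substituting $\dot h=-h(P_{\mu(t)}+Q_{\mu(t)}^{ac})$, together with the equivariance $P_{\mu(t)}=h P_{(\lb,\omega(t),g(t))}h^{-1}$ from the first paragraph, everything collapses to $\ddt\omega(t)=-2p(\omega(t),g(t))$, and analogously $\ddt g(t)=-2q(\omega(t),g(t))$, where one must keep track that the symmetric part contributes via $Q$ and the skew part via $P$, and that the $ac$-correction to $Q$ is exactly what is needed for compatibility to be preserved (this is where the relation $q^{1,1}=p^{1,1}(\cdot,J\cdot)$ from the excerpt enters, ensuring the evolution of $J$ is $\dpar J=-2J(P^{ac}+Q^{ac})$ consistently on both sides).

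The step I expect to be the main obstacle is (ii): pinning down precisely why the combination $P_\mu+Q_\mu^{ac}$ — and not, say, $P_\mu+Q_\mu$ — is the correct generator, i.e.\ verifying that pulling back by the flow of $-h(P+Q^{ac})$ reproduces the metric evolution $\dpar g=-2q$ on the nose rather than $-2q^{ac}$ or similar. The resolution is that a time-dependent change of variables by diffeomorphisms coming from Lie group automorphisms can only move $g$ within its $\Gl(\ggo)$-orbit, hence can only account for the ``$J$-anti-invariant plus pure-gauge'' part of the evolution, while the genuinely metric (i.e.\ conformal-type, $J$-invariant up to the compatibility constraint) part must already be matched by the choice of $p,q$; unwinding the bookkeeping of $1,1$- versus anti-invariant components, using $\omega=g(J\cdot,\cdot)$ and the definitions $p=\omega(P\cdot,\cdot)$, $q=g(Q\cdot,\cdot)$, shows $Q^{ac}$ is exactly the piece that the automorphism flow is allowed to carry. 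Finally I would note that $h(t)$ is defined on the whole maximal interval of $\mu(t)$, and conversely the $(p,q)$-flow solution $(\omega(t),g(t))=h(t)^*(\omega_0,g_0)$ is defined there too; combined with uniqueness of solutions to the $(p,q)$-flow (guaranteed since it is an ODE, as observed in the Introduction), this shows the two maximal intervals coincide and the stated identities hold for all $t$ in that interval.
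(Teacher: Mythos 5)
Your overall architecture is the same as the paper's (Lemma \ref{h} combined with the proof of Theorem \ref{eqfl}): solve the bracket flow, define $h(t)$ by a linear ODE driven by $P_\mu+Q_\mu^{ac}$, identify $h(t)\cdot\lb$ with $\mu(t)$ by ODE uniqueness, and then check that $h(t)^*(\omega_0,g_0)$ solves the $(p,q)$-flow using equivariance of the curvature operators. But there is a concrete error in the ODE you write down: the endomorphism sits on the wrong side of $h$. Since $\ddt(h\cdot\lb)=-\delta_{h\cdot\lb}(h'h^{-1})$, matching the bracket flow $\ddt\mu=\delta_\mu(P_\mu+Q_\mu^{ac})$ requires $h'h^{-1}=-(P_\mu+Q_\mu^{ac})$, i.e. $\ddt h=-(P_{\mu(t)}+Q_{\mu(t)}^{ac})\,h$. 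With your equation $\ddt h=-h(P_{\mu(t)}+Q_{\mu(t)}^{ac})$ one gets $-h'h^{-1}=h(P_\mu+Q_\mu^{ac})h^{-1}$, which is not $P_\mu+Q_\mu^{ac}$ unless $h$ happens to commute with it, so your step (i) does not close up; the same substitution also blocks the ``collapse'' you claim in step (ii), because $h(P_\mu+Q_\mu^{ac})=h\cdot h(P+Q^{ac})h^{-1}$ does not simplify. The equivariance you yourself record, $P_{\mu}=hP_{(\lb,h^*(\omega_0,g_0))}h^{-1}$, shows that the correct equation can equivalently be written $\ddt h=-h(P+Q^{ac})$ with $P,Q$ the operators of the \emph{pulled-back} structure $(\lb,\omega(t),g(t))$ --- this is exactly the pair of ODEs (i) and (ii) in Theorem \ref{eqfl} --- but it cannot be written with $P_\mu$ multiplying on the right.

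The second issue is that the heart of the matter --- why pulling back by the flow of this particular generator reproduces $\dpar g=-2q$ \emph{and} $\dpar\omega=-2p$ simultaneously --- is only gestured at, and the heuristic you offer (that the automorphism gauge ``can only carry the $J$-anti-invariant part'' of the metric evolution) is not the actual mechanism: the $\Gl(\ggo)$-orbit of $g_0$ consists of \emph{all} inner products, so there is no such restriction. The real point, which is the content of the paper's Lemma \ref{h}, is the identity $P+Q^{ac}=P^{ac}+Q$ forced by the compatibility condition $P^c=Q^c$ of \eqref{comp}, together with the symmetry types of the pieces ($(P^{ac})^tG=-GP^{ac}$, $Q^tG=GQ$, $P^{ac}J=-JP^{ac}$): writing $G=h^th$ and $\Omega=-J_0h^tJ_0h$ as in \eqref{OmGh}, the single generator $P^{ac}+Q$ then yields $\ddt G=-2GQ$, while the same generator rewritten as $P+Q^{ac}$ yields $\ddt\Omega=-2\Omega P$. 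You need to carry out this computation explicitly; with it done and the sidedness of the ODE corrected, your argument becomes the paper's proof.
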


The following are direct consequences of the theorem:

\begin{itemize}
\item The maximal interval of time existence $(T_-,T_+)$ is the same for both flows, as it is also the behavior
of any kind of curvature, and so regularity issues can be addressed on the $(p,q)$-bracket flow.

\item Assume that a normalization $c_k\mu(t_k)\to\lambda$, as $t_k\to T_\pm$, and denote by $\vp_k:G\longrightarrow G_{c_k\mu(t_k)}$ the isomorphism with derivative $\tfrac{1}{c_k}h(t_k)$.  It follows from \cite[Corollary 6.20]{spacehm} that, after possibly passing to a subsequence, the almost-hermitian manifolds $\left(G,\tfrac{1}{c_k^2}\omega(t_k),\tfrac{1}{c_k^2}g(t_k)\right)$ converge in the pointed (or Cheeger-Gromov) sense to $(G_\lambda,\omega_0,g_0)$, as $k\to\infty$.  We note that $G_\lambda$ may be non-isomorphic, and even non-homeomorphic, to $G$.
\end{itemize}

\subsection*{Regularity} As a first application of the above theorem, we obtain the following general regularity result (see Section \ref{sec-reg}).

\begin{theorem}\label{intro4}
If a left-invariant $(p,q)$-flow solution $(\omega(t),g(t))$ on a Lie group has a finite-time singularity at $T_+$ (resp. $T_-$), then
$$
\int_0^{T_+} |P+Q^{ac}|\; dt =\infty \qquad \left(\mbox{resp.}\quad \int_{T_-}^0 |P+Q^{ac}|\; dt =\infty\right).
$$
\end{theorem}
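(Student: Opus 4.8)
The plan is to pass to the bracket flow via Theorem~\ref{intro3} and reduce the claim to a statement about the length of the ODE trajectory in $\lca_{2n}$. Suppose, for contradiction, that the solution has a finite-time singularity at $T_+<\infty$ while $\int_0^{T_+}|P+Q^{ac}|\,dt<\infty$. Since the $(p,q)$-flow and the $(p,q)$-bracket flow share the same maximal interval of existence, $\mu(t)$ is also defined only on a maximal interval with right endpoint $T_+$. Writing $A_\mu(t):=P_{\mu(t)}+Q_{\mu(t)}^{ac}\in\End(\ggo)$, the bracket flow equation \eqref{intro2} reads $\ddt\mu=\delta_{\mu}(A_\mu)$. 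The key elementary observation is that $\delta_\mu$ depends \emph{linearly} on $\mu$ (it is, up to sign, the infinitesimal $\Gl(\ggo)$-action), so there is a universal constant $C=C(n)$ with $|\delta_\mu(A)|\le C|\mu|\,|A|$ for all $\mu\in\Lambda^2\ggo^*\otimes\ggo$ and $A\in\End(\ggo)$. Hence along the flow $\left|\ddt|\mu(t)|\right|\le\left|\ddt\mu(t)\right|=|\delta_{\mu(t)}(A_\mu(t))|\le C\,|\mu(t)|\,|A_\mu(t)|$.

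Next I would integrate this differential inequality. Gr\"onwall's lemma gives, for $0\le s<t<T_+$,
\begin{equation*}
|\mu(t)|\le|\mu(s)|\exp\!\left(C\int_s^t|A_\mu(r)|\,dr\right),
\end{equation*}
and by the same token $|\mu(t)|$ is also bounded below away from $0$ on $[0,T_+)$ (run the inequality the other way, or note $|\mu|\ge|\mu(0)|\exp(-C\int_0^{T_+}|A_\mu|\,dr)$). Since by hypothesis $\int_0^{T_+}|A_\mu(r)|\,dr=\int_0^{T_+}|P+Q^{ac}|\,dr<\infty$ — here one uses that $P$ and $Q$ are the curvature endomorphisms of $(G_{\mu(t)},\omega_0,g_0)$ whose norms, being diffeomorphism invariant, agree with those of $P(t),Q(t)$ for the original structure $(\omega(t),g(t))$ — we conclude that $|\mu(t)|$ stays bounded, $0<c_1\le|\mu(t)|\le c_2<\infty$, on all of $[0,T_+)$. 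Moreover the bound on $\ddt\mu$ becomes $|\ddt\mu(t)|\le C c_2|A_\mu(t)|\in L^1[0,T_+)$, so $\mu(t)$ is Cauchy as $t\to T_+$ and converges to some $\mu_\infty\in\Lambda^2\ggo^*\otimes\ggo$.

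Finally I would argue that $\mu_\infty$ lies in $\lca_{2n}$ (the Jacobi identity is a closed condition, preserved in the limit) and that, since $|\mu_\infty|\ge c_1>0$ and the curvature endomorphisms depend smoothly on the bracket, $\mu_\infty$ is an honest point of the variety at which the vector field $\mu\mapsto\delta_\mu(P_\mu+Q^{ac}_\mu)$ is defined and smooth. Standard ODE theory then extends the bracket flow solution past $T_+$, contradicting maximality of the interval; transporting back through Theorem~\ref{intro3} contradicts the assumed finite-time singularity of $(\omega(t),g(t))$ at $T_+$. The backward case $T_-$ is identical after reversing time. I expect the main obstacle to be the bookkeeping needed to justify that a bound on $|\mu(t)|$ together with $\mu_\infty\in\lca_{2n}$ genuinely yields smoothness of the defining vector field at $\mu_\infty$ (equivalently, that the almost-hermitian data $(\omega_0,g_0)$ remain compatible and the Chern/other curvature tensors do not degenerate); this is where one must be slightly careful that no auxiliary quantity — such as a metric or a structure constant entering the definition of $P_\mu,Q_\mu$ — blows up even though $|\mu|$ itself is controlled, but since $(\omega_0,g_0)$ is fixed and only $\mu$ varies, the curvature endomorphisms are manifestly polynomial (hence smooth) in $\mu$, so this is routine.
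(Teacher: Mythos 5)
Your proposal is correct and follows essentially the same route as the paper: pass to the bracket flow via the equivalence theorem, use the linearity of $\delta_\mu$ in $\mu$ to get $\left|\ddt\mu\right|\le C|\mu|\,|P_\mu+Q_\mu^{ac}|$, and apply Gr\"onwall so that finiteness of the integral forces $|\mu(t)|$ to stay bounded, contradicting the fact that a bounded bracket-flow trajectory (with integrable derivative) extends past $T_+$. The paper phrases this contrapositively — it first shows $|\mu(t)|$ must blow up at a finite-time singularity and then reads off $\log|\mu(s)|^2-\log|\mu_0|^2\le 2C_1\int_0^s|P_\mu+Q_\mu^{ac}|\,dt$ — but the content is identical.
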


This was proved for the pluriclosed flow on any compact manifold in \cite[Theorem 1.2]{StrTn3} and may be considered as the analogous to the result
for the Ricci flow given in \cite{Ssm}.  We also compute the evolution of the Ricci and scalar curvatures of $g(t)$ along a $(p,q)$-solution $(\omega(t),g(t))$, as well as the evolution of the norm of the Lie bracket $|\mu(t)|$.  Note that if $|\mu(t)|$ were non-increasing, then the long-time existence of the solution (i.e. $T_+=\infty$) would follow from a standard ODE argument.  This is precisely the technique applied in \cite{EnrFnVzz} to prove long-time existence for any pluriclosed solution on a nilmanifold.  An alternative way to prove long-time existence on a solvmanifold is by showing that the scalar curvature $R$ must blow up at a finite-time singularity, as it is well-known that $R\leq 0$ for any left-invariant metric on a solvable Lie group (see \cite{Lfn} for an application of this argument in the Ricci flow case).

\subsection*{Solitons} In the general case, an almost-hermitian manifold $(M,\omega,g)$ will flow self-similarly along the $(p,q)$-flow, in the sense that
$$
(\omega(t),g(t))=(c(t)\vp(t)^*\omega_0,c(t)\vp(t)^*g_0),
$$
for some $c(t)>0$ and $\vp(t)\in\Diff(M)$, if and only if
$$
\left\{
\begin{array}{l}
p(\omega,g)=c\omega+\lca_{X}\omega, \\ \\
q(\omega,g)=cg+\lca_{X}g,
\end{array}\right.
$$
for some $c\in\RR$ and a complete vector field $X$ on $M$.  In analogy to the terminology used in Ricci flow theory, we call such $(\omega,g)$ a {\it soliton} almost-hermitian structure.  On Lie groups, it is natural to consider a $(p,q)$-flow solution to be self-similar if the diffeomorphisms $\vp(t)$ above are actually Lie group automorphisms.  We prove in Section \ref{sec-self} that this is equivalent to the following condition.

\begin{definition}
An almost-hermitian structure $(\omega,g)$ on a Lie algebra $\ggo$ is called a $(p,q)$-{\it soliton} if for some $c\in\RR$ and $D\in\Der(\ggo)$,
$$
\left\{
\begin{array}{l}
P(\omega,g)=cI+\unm(D-JD^tJ), \\ \\
 Q(\omega,g)=cI+\unm(D+D^t).
\end{array}\right.
$$
\end{definition}

\begin{remark}
This notion is stronger than the general soliton condition above (see Remark \ref{soliton}).  Nevertheless, if a homogeneous soliton almost-hermitian structure $(M,\omega,g)$ is presented as $M=G/K$ for the full symmetry group $G$, then one can prove in much the same way as in \cite[Theorem 3.1]{Jbl3} that there exists a one-parameter family of equivariant diffeomorphisms $\phi_t\in\Aut(G/K)$ (i.e. automorphisms of $G$ taking $K$ onto $K$) such that $(\omega(t),g(t))=(c(t)\phi_t^*\omega,c(t)\phi_t^*g)$ is a solution to the $(p,q)$-flow starting at $(\omega,g)$.
\end{remark}

We also show that the simpler condition,
\begin{equation}\label{intro5}
P+Q^{ac}=cI+D,
\end{equation}
suggested by the relationship between a $(p,q)$-flow and its $(p,q)$-bracket flow given in Theorem \ref{intro3}, is actually enough to get a soliton.  Moreover, the $(p,q)$-bracket flow solution starting at a $(p,q)$-soliton for which \eqref{intro5} holds is simply given by $\mu(t)=(-2ct+1)^{-1/2}\lb$ and hence they are precisely the fixed points and only possible limits, backward and forward, of any normalized $(p,q)$-bracket flow solution $c(t)\mu(t)$. The absence of certain chaotic behavior for the $(p,q)$-bracket flow would imply that any $(p,q)$-soliton is actually of this kind (see Section \ref{BFsoliton}).

\subsection*{Chern-Ricci flow} In Section \ref{sec-CRF}, we study the Chern-Ricci flow $\dpar \omega=-2p$ for hermitian metrics on a fixed complex manifold $(M,J)$ (CRF for short), where $p$ is the Chern-Ricci form.  In the case of Lie groups, $p$ depends only on $J$, and so along a solution, $p(t)\equiv p_0$.  This implies that the CRF-solution starting at $(\omega_0,g_0)$ is simply given by
$$
\omega(t)=\omega_0-2tp_0,
$$
which therefore exists as long as the hermitian map $I-2tP_0$ is positive.  It follows that the integral of the Chern scalar curvature $\tr{P}$ must blow up at a finite-time singularity and that $\tr{P(t)}\leq \frac{C}{T_+-t}$, for some constant $C>0$.  By using Theorem \ref{intro3}, one obtains that the solution to the Chern-Ricci bracket flow $\ddt\mu=\delta_\mu(P_\mu)$ is given by
$$
\mu(t)=(I-2tP_0)^{1/2}\cdot\lb,
$$
and as an application, we give a structural result for Chern-Ricci solitons.  Concerning convergence, we prove that the normalized Chern-Ricci bracket flow $\mu(t)/|\mu(t)|$ always converges, as $t\to T_\pm$, to a nonabelian Lie bracket $\lambda_\pm$ such that $(G_{\lambda_\pm},\omega_0,g_0)$ is a Chern-Ricci soliton.  A more detailed study on how is the limit $\lambda_\pm$ related to the starting point is given in \cite{CRF}, where it is also addressed the existence problem for Chern-Ricci solitons on $4$-dimensional solvable Lie groups.

\subsection*{Symplectic curvature flow} Another $(p,q)$-flow we consider in particular is the symplectic curvature flow (SCF for short) for a one-parameter family $(\omega(t),g(t))$ of almost-K\"ahler structures introduced in \cite{StrTn2}:
\begin{equation}
\left\{\begin{array}{l}
\dpar\omega=-2p, \\ \\
\dpar g=-2p^{1,1}(\cdot,J\cdot)-2\ricci^{(2,0)+(0,2)},
\end{array}\right.
\end{equation}
where $p$ is the Chern-Ricci form of $(\omega,g)$ and $\ricci$ is the Ricci tensor of $g$ (see Section \ref{sec-SCF}).  It is conjectured in \cite{StrTn2} that a SCF-solution $(\omega(t),g(t))$ with $M$ compact exists smoothly as long as the cohomology class $[\omega(t)]\in H^2(M,\RR)$ belongs to the cone $\cca$ of all classes which can be represented by a symplectic form, which evolves by
$$
[\omega(t)]=-4t\pi c_1(M,\omega_0)+[\omega_0].
$$
In the case that $c_1(M,\omega_0)=0$, which in particular holds for invariant almost-K\"ahler structures on compact quotients $M=G/\Gamma$ of Lie groups, the solution is therefore expected to be immortal, i.e. $T_+=\infty$.  We confirm the conjecture for the anti-complexified Ricci flow on any compact solvmanifold and also for some explicit examples with $p\ne 0$ we give in detail for SCF, including some nilmanifolds already studied in \cite{Pk}.  Formula \eqref{intro5} has shown to be very useful for SCF.  In \cite{SCFmuA}, it has been found a SCF-soliton on most of symplectic $4$-dimensional Lie groups, and in \cite{Frn}, on most $2$ and $3$-step symplectic nilpotent Lie groups of dimension $6$.

\vs \noindent {\it Acknowledgements.} The author is grateful to Luigi Vezzoni and Cynthia Will for several very helpful conversations, and to the referee for many useful comments which improved the first version of the present paper.

\section{Some notation}\label{sec-not}

Let $\ggo$ be a real vector space.  The following notation will be used for $\ggo$ the tangent space $T_pM$ at a point of a differentiable manifold, as well as for the underlying vector space of a Lie algebra.  We consider an almost-hermitian structure $(\omega,g,J)$ on $\ggo$, that is, a $2$-form $\omega$ and an inner product $g$ such that if
$$
\omega=g(J\cdot,\cdot),
$$
then $J^2=-I$.  The above formula is therefore equivalent to $g=\omega(\cdot,J\cdot)$.

The transposes of a linear map $A:\ggo\longrightarrow\ggo$ with respect to $g$ and $\omega$ are respectively given by
$$
g(A\cdot,\cdot)=g(\cdot,A^t\cdot), \qquad \omega(A\cdot,\cdot)=\omega(\cdot,A^{t_\omega}\cdot), \qquad A^{t_\omega}=-JA^tJ,
$$
and if $p:\ggo\times\ggo\longrightarrow\RR$ is a bilinear map, then the complexified (or $J$-invariant) and anti-complexified (or anti-$J$-invariant) components are defined by
$$
A=A^c+A^{ac}, \qquad A^c:=\unm(A-JAJ), \qquad A^{ac}:=\unm(A+JAJ),
$$
and $p=p^c+p^{ac}$, where
$$
p^c=p^{(1,1)}:=\unm(p(\cdot,\cdot)+p(J\cdot,J\cdot)),  \qquad p^{ac}=p^{(2,0)+(0,2)}:=\unm(p(\cdot,\cdot)-p(J\cdot,J\cdot)).
$$
Consider also a $\ggo$-valued bilinear map $\mu:\ggo\times\ggo\longrightarrow\ggo$.  The natural `change of basis' actions of $\Gl(\ggo)$ are defined by
$$
h\cdot A:=hAh^{-1}, \qquad h\cdot p:=p(h^{-1}\cdot,h^{-1}\cdot), \qquad h\cdot\mu:=h\mu(h^{-1}\cdot,h^{-1}\cdot),\qquad \forall h\in\Gl(\ggo),
$$
and the corresponding representation $\pi:\End(\ggo)\longrightarrow\End((\ggo^*)^2\otimes\ggo)$ is given by
\begin{equation}\label{defpi}
\pi(H)\mu:=H\mu(\cdot,\cdot)-\mu(H\cdot,\cdot)-\mu(\cdot,H\cdot), \qquad\forall H\in\End(\ggo).
\end{equation}

\section{Curvature flows for almost-hermitian manifolds}\label{sec-CF}

Let $M$ be a differentiable manifold of dimension $2n$.  We consider for a one-parameter family of almost-hermitian structures $(\omega(t),g(t),J(t))$ on $M$ an evolution equation of the form
\begin{equation}\label{eq}
\left\{\begin{array}{l}
\dpar\omega=-2p, \\ \\
\dpar g=-2q,
\end{array}\right.
\end{equation}
where $p=p(\omega,g)\in\Lambda^2M$ is a $2$-form and $q=q(\omega,g)\in\sca^2M$ is a symmetric $2$-tensor, which will usually be some kind of curvature tensors associated to each structure $(\omega,g)$.  Equation \eqref{eq} will be called the $(p,q)$-{\it flow}.

It is natural to assume that $p$ and $q$ are invariant by diffeomorphisms:
$$
p(\vp^*\omega,\vp^*g)=\vp^*p(\omega,g), \qquad q(\vp^*\omega,\vp^*g)=\vp^*q(\omega,g), \qquad\forall\vp\in\Diff(M).
$$

\begin{example}\label{ex1}
In\cite{StrTn2}, short-time existence, uniqueness and some regularity results have been proved in the compact case for a large family of $(p,q)$-flow equations associated to the Chern connection.  For any of such flows, if $J_0$ is integrable then $J(t)\equiv J_0$ and $g(t)$ is a solution to the {\it hermitian curvature flow} studied in \cite{StrTn1}.  If in addition $g_0$ is K\"ahler (i.e. $\omega_0$ closed), then $g(t)$ is the K\"ahler-Ricci flow solution.  In the presence of an SKT structure, this evolution equation is called the {\it pluriclosed flow} and $p$ becomes the $(1,1)$-part of the Bismut-Ricci form (see \cite{StrTn4,StrTn3,EnrFnVzz}).  The hermitian curvature flow for almost-hermitian manifolds introduced in \cite{Vzz1} also belongs to the family of $(p,q)$-flows studied in \cite{StrTn2}.
\end{example}

\begin{example}\label{ex2}
The {\it Chern-Ricci flow} is another evolution equation for hermitian manifolds (see Section \ref{sec-CRF}).  Here $p$ is the Chern-Ricci form and $J(t)\equiv J_0$, i.e. the complex manifold is fixed.  Existence and uniqueness of the solutions, as well as some regularity and convergence results have been obtained in \cite{TstWnk}.  This flow also coincides with the K\"ahler-Ricci flow when the starting hermitian structure is K\"ahler.
\end{example}

\begin{example}\label{ex3}
In the almost-K\"ahler case (i.e. $\omega$ closed), by taking $p$ the Chern-Ricci form of $(\omega,g)$ and $q=p^c(\cdot,J\cdot)+\ricci^{ac}$, where $\ricci$ is the Ricci tensor of $g$, equation \eqref{eq} becomes the {\it symplectic curvature flow} introduced in \cite{StrTn2} (see Section \ref{sec-SCF}).  In the case when $p(\omega,g)=0$ for all time $t$, the symplectic structure $\omega$ is fixed and $g(t)$ solves the {\it anti-complexified Ricci flow} studied in \cite{LeWng}.
\end{example}

Let us now analyze under what conditions the evolution preserves compatibility.  If we take the corresponding operator type tensors $P,Q\in\End(TM)$ defined by
\begin{equation}\label{defPQ}
p=\omega(P\cdot,\cdot)= g(JP\cdot,\cdot), \qquad q=g( Q\cdot,\cdot),
\end{equation}
then $P^{t_\omega}=P$ (i.e. $P^t=-JPJ$) and $Q^t=Q$.  It follows from the formula $\omega=g(J\cdot,\cdot)$ that the evolution of $J$ is given by
\begin{equation}\label{eqJ}
\dpar J=-2R, \qquad \mbox{where}\qquad R=JP- QJ.
\end{equation}
This implies that, provided the starting point $(\omega_0,g_0)$ is compatible, the compatibility condition $J^2=-I$ holds for all time $t$ if and only if $RJ+JR=0$.  It follows that the compatibility of the solution $(\omega,g)$ for all time $t$ is equivalent to
\begin{equation}\label{comp}
    P^c=Q^c, \qquad\mbox{or equivalently}, \qquad q^c=p^c(\cdot,J\cdot), \qquad\forall t.
\end{equation}
(Compare with \cite[Lemma 4.2]{StrTn2}).  Indeed, if $P^c=Q^c$ then $\dpar J^2=-4(P-Q)(J^2+I)$ and so $J(t)^2\equiv-I$ by uniqueness of the solution starting at $J_0^2=-I$.  The converse follows from $0=RJ+JR=2(-P^c+Q^c)$.

It is therefore natural to assume that the pair $(p,q)$ of curvature tensors defining a $(p,q)$-flow evolution satisfies condition \eqref{comp} for any almost-hermitian structure.

By using \eqref{comp}, the evolution of the almost-complex structure can be rewritten as
\begin{equation}\label{eqJ2}
\dpar J=-2J(P^{ac}+ Q^{ac}).
\end{equation}
In the almost-K\"ahler case, we note that to have $p$ closed is clearly enough to get $\omega(t)$ closed for all $t$ provided $\omega_0$ is closed, by using that $\dpar d\omega=-2d p$.

\section{Curvature flows for Lie groups}\label{sec-LG}

Our aim in this paper is to study the $(p,q)$-flow evolution of compact almost-hermitian manifolds $(M,\omega,g)$ whose universal cover is a Lie group $G$ and such that if $\pi:G\longrightarrow M$ is the covering map, then $\pi^*\omega$ and $\pi^*g$ are left-invariant.  This is in particular the case of invariant structures on a quotient $M=G/\Gamma$, where $\Gamma$ is a cocompact discrete subgroup of $G$ (e.g. solvmanifolds and nilmanifolds).  A $(p,q)$-flow solution on $M$ is therefore obtained by pulling down the corresponding $(p,q)$-flow solution on the Lie group $G$, which by diffeomorphism invariance stays left-invariant and so it can be studied on the Lie algebra as an ODE.

Any almost-hermitian structure on a Lie group with Lie algebra $\ggo$ which is left-invariant is determined by a compatible pair $(\omega,g)$, where $\omega$ is a non-degenerate $2$-form on the vector space $\ggo$ and $g$ is an inner product on $\ggo$.  Any $(p,q)$-flow \eqref{eq} on $M$ or on the covering Lie group $G$ therefore becomes an ODE system of the form
\begin{equation}\label{eqLG}
\left\{\begin{array}{l}
\ddt\omega=-2p, \\ \\
\ddt g=-2q,
\end{array}\right.
\end{equation}
where $p=p(\omega,g)\in\Lambda^2\ggo^*$, $q=q(\omega,g)\in\sca^2\ggo^*$, since all the tensors involved are determined by their value at the identity of the group.  Thus short-time existence (forward and backward) and uniqueness (among left-invariant ones) of the solutions are always guaranteed.

We may consider a one-parameter family $(\omega(t),g(t))$ starting at $(\omega_0,g_0)$ in terms of a pair of operators $(\Omega(t), G(t))$ in $\Gl(\ggo)$ as follows,
$$
\left\{\begin{array}{l}
\omega(t)=\omega_0(\Omega(t)\cdot,\cdot), \qquad\Omega(0)=I, \\ \\
g(t)=g_0(G(t)\cdot,\cdot), \qquad G(0)=I,
\end{array}\right.
$$
and thus the $(p,q)$-flow defined as in \eqref{eqLG} for $(\omega,g)$ is equivalent to the ODE system
\begin{equation}\label{eqLG2}
\left\{\begin{array}{l}
\ddt\Omega=-2\Omega P,  \\ \\
\ddt G=-2G Q,
\end{array}\right.
\end{equation}
where $P,Q\in\End(\ggo)$ are defined as in \eqref{defPQ}.  It is easy to see that for all $t$,
\begin{equation}\label{jota}
    J_0\Omega=GJ.
\end{equation}
As there is only one of these compatible pairs $(\omega,g)$ on the vector space $\ggo$, up to the action of $\Gl(\ggo)$, we have that a solution to a $(p,q)$-flow \eqref{eqLG} starting at $(\omega_0,g_0)$ can always be written as
$$
(\omega(t),g(t))=(h^{-1}\cdot\omega_0,h^{-1}\cdot g_0) =\left(\omega_0(h\cdot,h\cdot),g_0(h\cdot,h\cdot)\right),
$$
for some $h=h(t)\in\Gl(\ggo)$, or equivalently,
\begin{equation}\label{OmGh}
\Omega(t)=h^{t_{\omega_0}}h=-J_0h^tJ_0h, \qquad G(t)=h^th,
\end{equation}
where $A^t$ will denote from now on the transpose of an operator $A$ with respect to $g_0$.  It follows that
\begin{equation}\label{Jh}
J(t)=h^{-1}J_0h, \qquad\forall t.
\end{equation}
We note that $h(t)$ is unique only up to left-multiplication by the unitary group
$$
\U(n):=\Spe(\omega_0)\cap\Or(g_0) =\{\vp\in\Gl(\ggo):\omega_0=\omega_0(\vp\cdot,\vp\cdot), \quad g_0=g_0(\vp\cdot,\vp\cdot)\},
$$
and that $h(t)$ can be taken to be differentiable on $t$ (recall that $\dim{\ggo}=2n$).

A good understanding of the evolution of such $h(t)\in\Gl(\ggo)$ may provide a useful tool as the whole $(p,q)$-flow solution is determined by $h(t)$.

\begin{lemma}\label{h}
Let $(\omega(t),g(t))$ be a $(p,q)$-flow solution starting at $(\omega_0,g_0)$.  If $h=h(t)\in\Gl(\ggo)$ is the solution to the ODE
$$
\ddt h=-h(P+Q^{ac}) =-h(P^{ac}+Q), \qquad h(0)=I,
$$
then $(\omega,g)=(h^{-1}\cdot\omega_0,h^{-1}\cdot g_0)$ for all $t$.
\end{lemma}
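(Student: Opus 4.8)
The plan is to show that the pair $(\omega(t),g(t)):=(h^{-1}\cdot\omega_0,h^{-1}\cdot g_0)$, with $h(t)$ defined by the stated ODE, satisfies the $(p,q)$-flow equations \eqref{eqLG} with the correct initial condition, and then invoke uniqueness of solutions to conclude it coincides with the actual $(p,q)$-flow solution. Since $h(0)=I$, we have $(\omega(0),g(0))=(\omega_0,g_0)$, so only the evolution equations need checking.

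\emph{Step 1: Translate into the operators $\Omega,G$.} Writing $(\omega(t),g(t))=(h^{-1}\cdot\omega_0,h^{-1}\cdot g_0)$ is by \eqref{OmGh} equivalent to $\Omega=-J_0h^tJ_0h$ and $G=h^th$. The $(p,q)$-flow in the form \eqref{eqLG2} then amounts to verifying $\ddt\Omega=-2\Omega P$ and $\ddt G=-2GQ$, where $P,Q$ are the curvature operators of the almost-hermitian structure $(\omega(t),g(t))$ on $\ggo$, and $J(t)=h^{-1}J_0h$ by \eqref{Jh}.

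\emph{Step 2: Differentiate using the ODE for $h$.} From $\ddt h=-h(P+Q^{ac})$ one gets $\ddt h^t=-(P+Q^{ac})^th^t=-(P^{t}+(Q^{ac})^t)h^t$. Now $Q^t=Q$ and $(Q^{ac})^t=Q^{ac}$ since $Q$ is $g$-symmetric and $A^{ac}$ preserves symmetry; also $P^t=-JPJ$ by the compatibility relation $P^{t_\omega}=P$. Hence $\ddt h^t=-(-JPJ+Q^{ac})h^t$. Plugging these into $G=h^th$ gives
\[
\ddt G=(\ddt h^t)h+h^t(\ddt h)=-(-JPJ+Q^{ac})h^th-h^th(P+Q^{ac}).
\]
One must now show the right-hand side equals $-2GQ=-2h^thQ$. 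Using $J=h^{-1}J_0h$ (so $hJ=J_0h$ and $Jh^{-1}=h^{-1}J_0$, hence $h^t J = $ ... one should rather conjugate: $-JPJ$ transported by $h^t(\cdot)h$ becomes, after using $h^tJ_0=J h^t$ which follows from $J_0^t=-J_0$ and $J(t)$ $g(t)$-compatible — this is the delicate bookkeeping), the combination collapses to $-2h^thQ$ precisely because $P+Q^{ac}$ and its "$t_\omega$-twisted" partner average to $Q$ on the symmetric side. The analogous computation for $\Omega=-J_0h^tJ_0h$, using $\ddt\Omega=-J_0(\ddt h^t)J_0h-J_0h^tJ_0(\ddt h)$ and the identity $J_0\Omega=GJ$ from \eqref{jota}, must reproduce $-2\Omega P$; here the $Q^{ac}$ and $P^{ac}$ contributions cancel against each other (note the ODE for $h$ was written in the two equivalent forms $-h(P+Q^{ac})=-h(P^{ac}+Q)$, and each form is the convenient one for one of the two equations).

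\emph{Step 3: Conclude by uniqueness.} Having verified both evolution equations and the initial condition, short-time existence and uniqueness for the ODE system \eqref{eqLG} (guaranteed since everything lives on the finite-dimensional space $\Lambda^2\ggo^*\times\sca^2\ggo^*$ and $p,q$ depend smoothly on $(\omega,g)$) forces $(\omega,g)=(h^{-1}\cdot\omega_0,h^{-1}\cdot g_0)$ on the whole interval of existence.

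\emph{Main obstacle.} The real work is the algebraic identity in Step 2: one must carefully track how the transposes $P^t$, $(Q^{ac})^t$ and the conjugations by $h$, $h^t$, $J_0$, $J$ interact, repeatedly using $P^t=-JPJ$, $Q^t=Q$, $J(t)=h^{-1}J_0h$, and the decomposition $A=A^c+A^{ac}$ together with $P^c=Q^c$ (compatibility \eqref{comp}). The point is that $h(P+Q^{ac})$ restricted to the "$g$-symmetric direction" is exactly $hQ$, while restricted to the "$\omega$-symmetric direction" it is exactly $hP$ — which is why the single ODE for $h$ simultaneously drives both $G$ and $\Omega$ correctly. Once this is set up cleanly the rest is immediate.
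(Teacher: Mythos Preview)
Your plan is exactly the paper's: check that $G=h^th$ and $\Omega=-J_0h^tJ_0h$ satisfy \eqref{eqLG2} and invoke uniqueness. But Step~2 as written contains a real confusion that prevents it from closing. When you differentiate $h^t$, the transpose is with respect to $g_0$ (this is the convention fixed just before \eqref{OmGh}), whereas the identities you invoke --- $Q^t=Q$ and $P^t=-JPJ$ --- hold for the transpose with respect to the moving metric $g(t)$. These operations differ once $G\ne I$; the correct link is $A^{t_{g_0}}G=GA^{t_{g(t)}}$, which yields for instance $(P^{ac})^{t_{g_0}}G=-GP^{ac}$ and $Q^{t_{g_0}}G=GQ$ (since $P^{ac}$ is $g(t)$-skew and $Q$ is $g(t)$-symmetric). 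This is exactly why the paper uses the alternative form $h'=-h(P^{ac}+Q)$ for the $G'$ computation: then
\[
G'=-(P^{ac}+Q)^{t_{g_0}}G-G(P^{ac}+Q)=GP^{ac}-GQ-GP^{ac}-GQ=-2GQ
\]
in one line. For $\Omega'$ the paper combines the same form with the identity $h^tJ_0h=GJ$ (which is \eqref{jota}) and $P^{ac}J=-JP^{ac}$. Your intermediate claim ``$h^tJ_0=Jh^t$'' is likewise off: transposing $hJ=J_0h$ with respect to $g_0$ gives $J^{t_{g_0}}h^t=-h^tJ_0$, but $J$ is $g(t)$-orthogonal, not $g_0$-orthogonal, so $J^{t_{g_0}}\ne -J$ in general. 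Once these bookkeeping points are corrected your argument is identical to the paper's.
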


\begin{remark}
For simplicity, given a time-dependent function $A=A(t)$, we will sometimes write $A'$ instead of $\ddt A$.
\end{remark}

\begin{proof}
We first note that the equality $P+Q^{ac} =P^{ac}+Q$ follows from \eqref{comp}.  By using \eqref{OmGh} and that the transpose of $P^{ac}$ and $Q$ with respect to $g_0$ respectively satisfy that $(P^{ac})^tG=-GP^{ac}$, $Q^tG=GQ$, we obtain
\begin{align*}
G'=&(h^t)'h+h^th' = -(P^{ac}+Q)^th^th -h^th(P^{ac}+Q) \\
=& -(P^{ac})^tG - Q^tG - GP^{ac} - G Q = -2G Q.
\end{align*}
On the other hand, we use in addition \eqref{Jh}, \eqref{comp}, \eqref{jota} and $P^{ac}J=-JP^{ac}$ to get
\begin{align*}
\Omega'=& -J_0(h^t)'J_0h -J_0h^tJ_0h' = J_0(P^{ac}+Q)^th^tJ_0h + J_0h^tJ_0h(P^{ac}+Q) \\
=& J_0(P^{ac})^tGJ + J_0Q^tGJ + J_0GJP^{ac} + J_0GJQ \\
=& J_0 \left( -GP^{ac}J + GQJ + GJP^{ac} + GJQ \right) = J_0\left( 2GJP^{ac} + 2GJQ^{c} \right) \\
=& -2\Omega(P^{ac}+Q^c) = -2\Omega(P^{ac}+P^c) = -2\Omega P,
\end{align*}
and thus $(h^{-1}\cdot\omega_0,h^{-1}\cdot g_0)$ is a $(p,q)$-flow solution by \eqref{eqLG2}.  Since it also starts at $(\omega_0,g_0)$, it must coincide with $(\omega,g)$ by uniqueness of the solution, concluding the proof of the lemma.
\end{proof}

An intriguing consequence of the above lemma is that only the value of $P+Q^{ac}$ determines the whole evolution.  This will play a key role throughout the rest of the paper, specially in the study of self-similar solutions.

\section{Bracket flow}\label{sec-BF}

We present in this section a general approach to study any $(p,q)$-flow on a Lie group.  Our main tool is a dynamical system defined on the variety of $2n$-dimensional Lie algebras, a set which parameterizes the space of all left-invariant almost-hermitian structures on all simply connected Lie groups of
dimension $2n$ (see \cite{minimal}).  This evolution is called the {\it bracket flow} and is proved to be equivalent in a precise sense to the $(p,q)$-flow.  The approach is useful to better visualize the possible pointed limits of $(p,q)$-flow solutions, under diverse rescalings, as well as to address regularity issues.  Immortal, ancient and self-similar solutions arise naturally from the qualitative analysis of the bracket flow.  This approach has been used to study the Ricci flow on nilmanifolds in \cite{nilricciflow} and on homogeneous manifolds in \cite{homRF}.  On the other hand, N. Enrietti, A. Fino and L. Vezzoni study in \cite{EnrFnVzz} the pluriclosed flow for nilmanifolds and as an application of the bracket flow approach, they prove long-time existence for any solution.

For Lie algebras, two almost-hermitian structures $(\ggo_1,\omega_1,g_1)$,  $(\ggo_2,\omega_2,g_2)$ are said to be {\it equivalent} if there exists a Lie algebra isomorphism $\vp:\ggo_1\longrightarrow\ggo_2$ such that $\omega_2=\vp\cdot\omega_1$ and $g_2=\vp\cdot g_1$.  This implies the usual equivalence between the corresponding left-invariant almost-hermitian structures on the simply connected Lie groups, and thus by the diffeomorphism invariance of $p$ and $q$, we have that
\begin{equation}\label{inv-pq}
P(\omega_2,g_2)=\vp P(\omega_1,g_1)\vp^{-1}, \qquad Q(\omega_2,g_2)=\vp Q(\omega_1,g_1)\vp^{-1}.
\end{equation}
The same holds for $P^c$, $P^{ac}$, $Q^c$ and $Q^{ac}$ by using that $J_2=\vp J_1\vp^{-1}$.

If $(\omega(t),g(t))$ is a $(p,q)$-flow solution starting at $(\omega_0,g_0)$, then by Lemma \ref{h} the family $h=h(t)\in\Gl(\ggo)$ defined there satisfies that
$$
h:(\lb,\omega,g)\longrightarrow (\mu,\omega_0,g_0), \qquad\mbox{where}\qquad \mu=\mu(t):=h\cdot\lb=h[h^{-1}\cdot,h^{-1}\cdot],
$$
is an equivalence of almost-hermitian structures for all $t$.  Here $\lb$ denotes the Lie bracket of the Lie algebra $\ggo$ and so $\mu$ defines a new Lie algebra with same underlying vector space $\ggo$, which is isomorphic to $(\ggo,\lb)$ for all $t$.  The values of the corresponding curvature tensors of this new family of almost-hermitian structures $(\mu(t),\omega_0,g_0)$ will be denoted by $P_\mu$ and $Q_\mu$, which by \eqref{inv-pq} satisfy
\begin{equation}\label{PQ}
P_\mu=hPh^{-1}, \qquad Q_\mu^{ac}=hQ^{ac}h^{-1}, \qquad\forall t,
\end{equation}
where as always $P=P(\omega,g)$ and $Q=Q(\omega,g)$ for all $t$.

It is straightforward to check that $\ddt\mu=-\delta_\mu(h'h^{-1})$ if $\mu=h\cdot\lb$, where
$\delta_\mu:\End(\ggo)\longrightarrow\Lambda^2\ggo^*\otimes\ggo$ is given by
\begin{equation}\label{defdelta}
\delta_\mu(A):=\mu(A\cdot,\cdot)+\mu(\cdot,A\cdot)-A\mu(\cdot,\cdot), \qquad\forall A\in\End(\ggo).
\end{equation}
Notice that $\delta_\mu(A)=-\pi(A)\mu = -\ddt|_{t=0} e^{tA}\cdot\mu$ (see \eqref{defpi}).  It follows from Lemma \ref{h} and \eqref{PQ} that the one-parameter family of Lie brackets $\mu(t)$ evolves according to the following ODE:
\begin{equation}\label{BF}
\ddt\mu=\delta_\mu(P_\mu+Q_\mu^{ac}), \qquad\mu(0)=\lb.
\end{equation}
This equation will be called the {\it $(p,q)$-bracket flow}, and a natural question is if the understanding of its qualitative behavior and dynamical properties may provide new insights into the study of some curvature flows for homogeneous almost-hermitian structures and their self-similar solutions.  Since $\ddt\mu$ is tangent to the $\Gl(\ggo)$-orbit of $\mu$, by a standard ODE theory argument we obtain that $\mu(t)\in \Gl(\ggo)\cdot\lb$ for all $t$.  Our next result shows that any $(p,q)$-flow is equivalent in a precise way to its corresponding $(p,q)$-bracket flow.

For a given simply connected almost-hermitian Lie group $(G,\omega_0,g_0)$ with Lie algebra $\ggo$, let us consider the following two one-parameter families of almost-hermitian Lie groups:
\begin{equation}\label{3rm}
(G,\omega(t),g(t)), \qquad (G_{\mu(t)},\omega_0,g_0),
\end{equation}
where $(\omega(t),g(t))$ is the solution to the $(p,q)$-flow (\ref{eqLG}) starting at $(\omega_0,g_0)$ and $\mu(t)$ is the $(p,q)$-bracket flow (\ref{BF}) starting at the Lie bracket $\lb$ of $\ggo$.  Here $G_\mu$ denotes the simply connected Lie group with Lie algebra $(\ggo,\mu)$.

\begin{theorem}\label{eqfl}
There exist time-dependent Lie group isomorphisms $h(t):G\longrightarrow G_{\mu(t)}$ such that
$$
(\omega(t),g(t))=h(t)^*(\omega_0,g_0), \qquad\forall t,
$$
which can be chosen such that their derivatives at the identity, also denoted by $h=h(t)$, is the solution to any of the following systems of ODE's:
\begin{itemize}
\item[(i)] $\ddt h=-h(P+Q^{ac})=-h(P^{ac}+Q)$, $\quad h(0)=I$.
\item[ ]
\item[(ii)] $\ddt h=-(P_\mu+Q_\mu^{ac})h=-(P_\mu^{ac}+Q_\mu)h$, $\quad h(0)=I$.
\end{itemize}
The following conditions also hold for all $t$:
\begin{itemize}
\item[(iii)] $(\omega(t),g(t))=(h^{-1}\cdot\omega_0, h^{-1}\cdot g_0)$.
\item[ ]
\item[(iv)] $\mu(t)=h\cdot\lb$.
\end{itemize}
\end{theorem}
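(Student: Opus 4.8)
The plan is to reduce everything to Lemma \ref{h}, which already does the bulk of the work, and then bookkeep the identification with the bracket flow. First I would take $h=h(t)$ to be the solution of the linear (in $h$) ODE in (i), namely $\ddt h=-h(P+Q^{ac})$ with $h(0)=I$, where $P=P(\omega(t),g(t))$, $Q=Q(\omega(t),g(t))$ and $(\omega(t),g(t))$ is the $(p,q)$-flow solution starting at $(\omega_0,g_0)$ (this solution exists by the short-time existence discussion in Section \ref{sec-LG}). Strictly speaking this is a coupled system, since the coefficient $P+Q^{ac}$ depends on $(\omega(t),g(t))$ and not on $h$; but the $(p,q)$-flow solution exists on its maximal interval independently of $h$, so once $(\omega(t),g(t))$ is fixed, (i) is a genuine linear ODE for $h$ and has a solution on the same interval. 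The equality $P+Q^{ac}=P^{ac}+Q$ in (i) is just \eqref{comp} ($P^c=Q^c$), as noted in the proof of Lemma \ref{h}. Then Lemma \ref{h} gives (iii) directly: $(\omega(t),g(t))=(h^{-1}\cdot\omega_0,h^{-1}\cdot g_0)$ for all $t$.

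Next I would read off (iv) and the isomorphism statement. Set $\mu(t):=h(t)\cdot\lb$. Since $\ddt\mu=-\delta_\mu(h'h^{-1})$ for any curve of the form $\mu=h\cdot\lb$ (the identity $\ddt\mu=-\delta_\mu(h'h^{-1})$ quoted before \eqref{defdelta}), and since $h'h^{-1}=-(P+Q^{ac})$ by (i), we get $\ddt\mu=\delta_\mu(P+Q^{ac})$. Now I invoke \eqref{PQ}: because $h:(\lb,\omega,g)\to(\mu,\omega_0,g_0)$ is an equivalence of almost-hermitian structures (this is exactly the content of \eqref{inv-pq} applied to the isomorphism $h$, using \eqref{Jh} for the compatibility of $J$), the curvature operators transform as $P_\mu=hPh^{-1}$ and $Q_\mu^{ac}=hQ^{ac}h^{-1}$. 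Hence $\delta_\mu(P+Q^{ac})=\delta_\mu(h^{-1}(P_\mu+Q_\mu^{ac})h)$; one checks from the definition \eqref{defdelta} of $\delta_\mu$ and $\mu=h\cdot\lb$ that $\delta_\mu\circ(h^{-1}(\cdot)h)$ applied to $P_\mu+Q_\mu^{ac}$ equals $\delta_\mu(P_\mu+Q_\mu^{ac})$ — more precisely, $\delta_{h\cdot\lb}(hAh^{-1})=h\,\delta_{\lb}(A)(h^{-1}\cdot,h^{-1}\cdot)$ is the intertwining property of $\delta$ with the $\Gl(\ggo)$-action — so in fact $\ddt\mu=\delta_\mu(P_\mu+Q_\mu^{ac})$ with $\mu(0)=\lb$. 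By uniqueness of ODE solutions this $\mu(t)$ is the $(p,q)$-bracket flow solution of \eqref{BF}, proving (iv). The Lie group isomorphism $h(t):G\to G_{\mu(t)}$ is then the simply-connected-group map with derivative $h(t)$ at the identity (it is an isomorphism onto $G_{\mu(t)}$ precisely because $h(t)\cdot\lb=\mu(t)$, i.e. $h(t)$ intertwines the brackets); and $(\omega(t),g(t))=h(t)^*(\omega_0,g_0)$ is just (iii) rephrased at the group level, since pulling back left-invariant tensors under $\widetilde{h(t)}$ is the $h(t)^{-1}\cdot$-action on $\ggo$.

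Finally, for (ii) I would simply differentiate $\mu=h\cdot\lb$ from the other side, or equivalently observe that both (i) and (ii) are ODEs whose solutions yield the same $h(t)$: from (i) we have $\ddt h=-h(P+Q^{ac})$, and substituting $P+Q^{ac}=h^{-1}(P_\mu+Q_\mu^{ac})h$ (which is \eqref{PQ}, noting $P=h^{-1}P_\mu h$ and $Q^{ac}=h^{-1}Q_\mu^{ac}h$) gives $\ddt h=-h\cdot h^{-1}(P_\mu+Q_\mu^{ac})h=-(P_\mu+Q_\mu^{ac})h$, which is (ii). The equality $P_\mu+Q_\mu^{ac}=P_\mu^{ac}+Q_\mu$ is again \eqref{comp}, valid for the structure $(\mu,\omega_0,g_0)$. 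Conversely the solution to (ii) produces, via $\mu:=h\cdot\lb$, a bracket flow solution by the same computation run in reverse, so by uniqueness the two descriptions of $h(t)$ coincide. I do not expect a genuine obstacle here: the only slightly delicate point is being careful about what is a priori given and what is being solved for — one must fix the $(p,q)$-flow solution $(\omega(t),g(t))$ first (guaranteeing the common maximal interval), and only afterwards regard (i) as a linear ODE for $h$; and one must verify the intertwining identity $\delta_{h\cdot\lb}\circ\Ad_h=h\cdot(\delta_{\lb}\,\cdot\,)$ for $\delta$, which is a direct unwinding of definition \eqref{defdelta} but should be stated explicitly.
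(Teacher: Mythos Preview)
Your approach is the same as the paper's (which first derives everything from (i) via Lemma~\ref{h} and the discussion preceding \eqref{BF}, then separately argues from (ii)), and the overall logic is sound. However, there is a computational slip in your derivation of (iv): from $h'=-h(P+Q^{ac})$ you get $h^{-1}h'=-(P+Q^{ac})$, \emph{not} $h'h^{-1}=-(P+Q^{ac})$. The correct identity is $h'h^{-1}=-h(P+Q^{ac})h^{-1}=-(P_\mu+Q_\mu^{ac})$ by \eqref{PQ}, and then $\ddt\mu=-\delta_\mu(h'h^{-1})=\delta_\mu(P_\mu+Q_\mu^{ac})$ follows immediately with no need for any intertwining argument. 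Your intertwining claim as stated, that $\delta_\mu(h^{-1}Bh)=\delta_\mu(B)$, is false in general; the correct equivariance $\delta_{h\cdot\lb}(hAh^{-1})=h\cdot\delta_{\lb}(A)$ that you quote does not yield it. Once this slip is corrected the rest of your argument, including the passage from (i) to (ii) via \eqref{PQ}, goes through exactly as you wrote it.
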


\begin{proof}
It has been shown above that part (i) implies all the other statements in the theorem by using Lemma \ref{h}.  Recall that any Lie algebra isomorphism can be lifted to a Lie group isomorphism between the respective simply connected Lie groups.  Let us then assume that part (ii) holds, and so $h(t)$ is defined on the same time interval as $\mu(t)$.  It follows that $\mu(t)=h(t)\cdot\lb$ for all $t$ as they both solve the same ODE \eqref{BF} and start at $\lb$, and so part (iv) holds.  Thus $h$ determines an equivalence between $(\lb,\wt{\omega}:=h^{-1}\cdot\omega_0,\wt{g}:=h^{-1}\cdot g_0)$ and $(\mu,\omega_0,g_0)$.  This implies that the corresponding curvature tensors satisfy $P_\mu=h\wt{P}h^{-1}$ and $Q_\mu^{ac}=h\wt{Q}^{ac}h^{-1}$, and consequently $h'=-h(\wt{P}+\wt{Q}^{ac})$, from which follows that $(\wt{\omega}(t),\wt{g}(t))$ is also a $(p,q)$-flow solution starting at $(\omega_0,g_0)$ by using Lemma \ref{h}.  By uniqueness of the solution, parts (i) and (iii) follow, concluding the proof of the theorem.
\end{proof}

It is worth pointing out the following useful facts which are direct consequences of the theorem:

\begin{itemize}
\item The $(p,q)$-flow $(\omega(t),g(t))$ and the $(p,q)$-bracket flow $\mu(t)$ differ only by pullback by time-dependent diffeomorphisms.

\item They are equivalent in the following sense: each one can be obtained from the other by solving the corresponding ODE in part (i) or (ii) and applying either part (iv) or (iii), accordingly.

\item The maximal interval of time where a solution exists is therefore the same for both flows, and so regularity issues can be directly addressed on the $(p,q)$-bracket flow.

\item At each time $t$, the almost-hermitian manifolds in (\ref{3rm}) are equivalent, so that the behavior
of any class of curvature and of any other invariant along $(\omega(t),g(t))$ can be studied along the $(p,q)$-bracket flow.
\end{itemize}

\subsection{Pointed convergence}\label{conver}
If some sequence $\mu(t_k)$ (or a suitable normalization $c_k\mu(t_k)$) converges to a Lie bracket $\lambda$ as $t_k\to\infty$, then we can apply the results given in \cite[Section 6.4]{spacehm} to get convergence of the almost-hermitian manifolds $(G,\omega(t_k),g(t_k))\to (G_\lambda,\omega_0,g_0)$ relative to the pointed (or Cheeger-Gromov) topology.

More precisely, assume that a normalization $c_k\mu(t_k)\to\lambda$, as $k\to\infty$, with $t_k\to T_\pm$, where $(T_-,T_+)$ denotes the maximal interval of time existence.  By Theorem \ref{eqfl}, if $\vp_k:G\longrightarrow G_{c_k\mu(t_k)}$ is the isomorphism with derivative $\tfrac{1}{c_k}h(t_k)$, then
$$
\vp_k^*(\omega_0,g_0)=\left(\tfrac{1}{c_k^2}\omega(t_k),\tfrac{1}{c_k^2}g(t_k)\right).
$$
It now follows from \cite[Corollary 6.20]{spacehm} that, after possibly passing to a subsequence, the almost-hermitian manifolds $\left(G,\tfrac{1}{c_k^2}\omega(t_k),\tfrac{1}{c_k^2}g(t_k)\right)$ converges in the pointed sense to $(G_\lambda,\omega_0,g_0)$, as $k\to\infty$.  This means that there exist
\begin{itemize}
\item[ ] a sequence of open subsets $\Omega_k\subset G_\lambda$ which eventually contains every compact subset of $G_\lambda$;

\item[ ] a sequence of embeddings $\phi_k:\Omega_k\longrightarrow G$;
\end{itemize}
such that $\tfrac{1}{c_k^2}\phi_k^*\omega(t_k)\to \omega_0$ and $\tfrac{1}{c_k^2}\phi_k^*g(t_k)\to g_0$ smoothly, as $k\to\infty$, on every compact subset of $G_\lambda$.  Note that the points play no role in the pointed convergence by homogeneity.  By using charts with relatively compact domains which cover $G_\lambda$, smooth convergence can be defined as the partial derivatives $\partial^\alpha(\omega_k)_{ij}$, $\partial^\alpha(g_k)_{ij}$ of the coordinates $(\omega_k)_{ij}$, $(g_k)_{ij}$, respectively, converging to $\partial^\alpha (\omega_0)_{ij}$ and $\partial^\alpha (g_0)_{ij}$ uniformly, as $k\to\infty$, for every chart and every multiindex $\alpha$.

It is not necessary to pass to a subsequence in the nilpotent, or more general, in the {\it completely solvable} case (i.e. when all the eigenvalues of $\ad{X}$ are real for any $X\in\ggo$).

We note that one can obtain at most one limit up to scaling by considering different normalizations of the bracket flow.  More precisely, assume that $c(t)\mu(t)\to\lambda\ne 0$, as $t\to T_\pm$.  Then the limit $\tilde{\lambda}$ of any other converging normalization $a(t)\mu(t)$ necessarily satisfies $\tilde{\lambda}=c\lambda$ for some $c\in\RR$ (see \cite[Proposition 4.1,(iii)]{homRS}).  Recall that the above observation only concerns solutions which are not chaotic, in the sense that the $\omega$-limit is a single point.

\section{Regularity}\label{sec-reg}

In the presence of any geometric flow, a natural question is what is the simplest quantity that, as long as it remains bounded, it prevents the formation of a singularity.  Many regularity results of this kind have been obtained for the hermitian curvature flow (see \cite{StrTn1}), the pluriclosed flow (see \cite{StrTn3}), the Chern-Ricci flow (see \cite{TstWnk}) and the symplectic curvature flow (see \cite{StrTn2}).  In this section, as an application of the $(p,q)$-bracket flow approach developed in Section \ref{sec-BF}, we obtain some regularity results for a general $(p,q)$-flow on a Lie group.

In addition to diffeomorphism invariance, in what follows, we shall assume that $p$ and $q$ are also scaling invariants:
$$
p(c\omega,cg)=p(\omega,g), \qquad q(c\omega,cg)=q(\omega,g), \qquad\forall c\in\RR^*.
$$
This condition actually holds for most of the curvature tensors considered in different evolution equations in the literature, including all the ones mentioned in Examples \ref{ex1}, \ref{ex2} and \ref{ex3}, and it is equivalent to
\begin{equation}\label{PQc}
P(c\omega,cg)=\tfrac{1}{c}P(\omega,g),  \qquad Q(c\omega,cg)=\tfrac{1}{c}Q(\omega,g), \qquad\forall c\in\RR^*.
\end{equation}

By using that for any $c\ne 0$,
$$
\tfrac{1}{c}I:(\mu,\tfrac{1}{c^2}\omega_0,\tfrac{1}{c^2}g_0)\longrightarrow (c\mu,\omega_0,g_0),
$$
is an equivalence of almost-hermitian structures and \eqref{PQc}, we obtain that the operators defined in \eqref{PQ} satisfy
\begin{equation}\label{PQmuc}
P_{c\mu}+Q_{c\mu}^{ac}=c^2(P_\mu+Q_\mu^{ac}), \qquad\forall c\in\RR.
\end{equation}

Let $(T_-,T_+)$ denote the maximal interval of time existence for the $(p,q)$-bracket flow solution $\mu(t)$ (see \eqref{BF}), or equivalently, of the $(p,q)$-flow solution $(\omega(t),g(t))$ starting at a left-invariant almost-hermitian structure $(\ggo,\omega_0,g_0)$, with $-\infty\leq T_-<0<T_+\leq\infty$.

The norm $|\mu|$ of a Lie bracket will be defined in terms of the canonical inner product on $\Lambda^2\ggo^*\otimes\ggo$ given by
\begin{equation}\label{ipg}
\la\mu,\lambda\ra:=\sum g_0(\mu(e_i,e_j),\lambda(e_i,e_j)) =\sum\mu_{ij}^k\lambda_{ij}^k,
\end{equation}
where $\{ e_i\}$ is any orthonormal basis of $(\ggo,g_0)$ and the structural constants are defined by $\mu(e_i,e_j)=\sum\mu_{ij}^ke_k$.  A natural inner product on $\End(\ggo)$ is also determined by $g_0$ by $\la A,B\ra:=\tr{AB^t}$.

The proof of the following proposition is strongly based on the arguments used by R. Lafuente in \cite{Lfn} to prove that the scalar curvature controls the formation of singularities of homogeneous Ricci flows.

\begin{proposition}\label{prop-reg}
If $T_+$ is finite (resp. $T_-$), then
\begin{itemize}
\item[(i)] $|\mu(t)|\geq\frac{C}{(T_+-t)^{1/2}}$, $\forall t\in[0,T_+)$ (resp. $|\mu(t)|\geq\frac{C}{(t-T_-)^{1/2}}$, $\forall t\in(T_-,0]$), for some positive constant $C$ depending only on $n$.
\item[ ]
\item[(ii)] $\int_0^{T_+} |P_\mu+Q_\mu^{ac}|\; dt =\infty$ (resp. $\int_{T_-}^0 |P_\mu+Q_\mu^{ac}|\; dt =\infty$).
\end{itemize}
\end{proposition}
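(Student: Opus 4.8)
The plan is to reduce the $(p,q)$-bracket flow \eqref{BF} to a scalar differential inequality for $|\mu(t)|$ and feed it into the escape lemma of ODE theory, in the spirit of Lafuente's argument in \cite{Lfn}. Two elementary estimates do the work. First, since $\delta_\mu(A)=-\pi(A)\mu$ is bilinear in $(A,\mu)$ on the finite-dimensional space $\Lambda^2\ggo^*\otimes\ggo$, there is a constant $c_n$ with $|\delta_\mu(A)|\le c_n|A|\,|\mu|$. Second, the curvature operators $P_\mu$ and $Q_\mu$ depend polynomially, of degree two, on the structural constants of $\mu$ relative to the fixed background $(\omega_0,g_0)$ (the Chern and Levi-Civita connections are linear in $\mu$, so their curvatures are quadratic), whence $|P_\mu+Q_\mu^{ac}|\le c_n|\mu|^2$; alternatively one reads this off from the scaling identity \eqref{PQmuc}, which forces $P_\mu+Q_\mu^{ac}$ to be homogeneous of degree two, together with its continuity. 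I would also record at the outset that, since the right-hand side of \eqref{BF} extends smoothly to all of $\Lambda^2\ggo^*\otimes\ggo$ and the solution stays in the single orbit $\Gl(\ggo)\cdot\lb$, a standard escape-lemma argument (cf. \cite{Lfn}) gives: if $T_+<\infty$ then $|\mu(t)|\to\infty$ as $t\to T_+$ (the case $\lb=0$ being vacuous, as then $T_\pm=\pm\infty$).

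For part (i), differentiating along \eqref{BF},
\[
\ddt|\mu|^2=2\la\delta_\mu(P_\mu+Q_\mu^{ac}),\mu\ra, \qquad\mbox{so}\qquad \bigl|\ddt|\mu|^2\bigr|\le c_n\,|P_\mu+Q_\mu^{ac}|\,|\mu|^2\le c_n\,|\mu|^4 .
\]
Setting $f:=|\mu|^2>0$ this reads $(1/f)'\ge-c_n$; integrating over $[t,s]\subset[0,T_+)$ and letting $s\to T_+$, where $1/f(s)\to0$ by the escape lemma, yields $1/f(t)\le c_n(T_+-t)$, that is, $|\mu(t)|\ge C(T_+-t)^{-1/2}$ with $C:=c_n^{-1/2}$. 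The statement near $T_-$ follows in the same way by integrating over $[s,t]\subset(T_-,0]$.

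For part (ii), from $\ddt|\mu|=|\mu|^{-1}\la\delta_\mu(P_\mu+Q_\mu^{ac}),\mu\ra$ and the bilinear bound one gets $\bigl|\ddt\log|\mu|\bigr|\le c_n|P_\mu+Q_\mu^{ac}|$, hence
\[
\bigl|\,\log|\mu(t)|-\log|\mu(0)|\,\bigr|\le c_n\int_0^t|P_\mu+Q_\mu^{ac}|\;dr, \qquad 0\le t<T_+ .
\]
If $\int_0^{T_+}|P_\mu+Q_\mu^{ac}|\;dr$ were finite, then $|\mu(t)|$ would stay bounded on $[0,T_+)$, contradicting $|\mu(t)|\to\infty$ (equivalently, contradicting part (i)); so the integral diverges, and the argument near $T_-$ is symmetric.

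The main obstacle is the quadratic bound $|P_\mu+Q_\mu^{ac}|\le c_n|\mu|^2$ with a constant depending only on $n$: for the concrete flows (Chern-Ricci flow, symplectic curvature flow, and the flows of Examples \ref{ex1}--\ref{ex3}) it is immediate, the relevant connections and hence their curvature tensors being universal polynomials of degree at most two in $\mu$; in the abstract $(p,q)$-setting one must extract it from the homogeneity \eqref{PQmuc} plus continuity of $(\omega,g)\mapsto(P,Q)$, at the harmless cost of letting the constant depend also on the particular flow. Everything else is the routine ODE-inequality-plus-escape-lemma template.
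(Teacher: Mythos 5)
Your proposal is correct and follows essentially the same route as the paper: the bilinear estimate $|\delta_\mu(A)|\le c_n|A||\mu|$ together with the quadratic bound $|P_\mu+Q_\mu^{ac}|\le c_n|\mu|^2$ extracted from the homogeneity \eqref{PQmuc}, then integration of the resulting differential inequalities for $|\mu|^2$ and $\log|\mu|$ combined with the blow-up of $|\mu(t)|$ at a finite-time singularity. The only cosmetic difference is that the paper integrates $\ddt|\mu|^2\le 2C_2|\mu|^4$ forward to bound $T_+$ from below, whereas you integrate $(1/|\mu|^2)'\ge -c_n$ backward from $T_+$; these are the same computation.
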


\begin{proof}
Assume that $T_+<\infty$ (the proof for $-\infty<T_-$ is completely analogous).  It follows from \eqref{PQmuc} that
\begin{equation}\label{reg1}
\left|\ddt\mu\right|\leq C_1|P_\mu+Q_\mu^{ac}| |\mu| \leq C_2|\mu|^3, \qquad\forall t,
\end{equation}
for some constants $C_1,C_2>0$ depending only on $n$.  This implies that $\ddt|\mu|^2\leq 2C_2|\mu|^4$, and so for any $t_0\in[0,T_+)$,
$$
|\mu(t)|^2\leq\frac{1}{-2C_2(t-t_0)+|\mu(t_0)|^{-2}}, \qquad\forall t\in[t_0,T_+).
$$
Thus $T_+\geq t_0+\tfrac{|\mu(t_0)|^{-2}}{2C_2}$ since $|\mu(t)|$ must blow up at a singularity, from which part (i) follows.

By using \eqref{reg1} one also obtains that
$$
\ddt|\mu|^2\leq 2C_1|P_\mu+Q_\mu^{ac}||\mu|^2, \qquad \forall t,
$$
and therefore part (ii) follows from
$$
2C_1\int_0^s |P_\mu+Q_\mu^{ac}| \; dt\geq \log{|\mu(s)|^2}-\log{|\mu_0|^2}, \qquad\forall s\in[0,T_+),
$$
concluding the proof of the proposition.
\end{proof}

The following corollary follows from Theorem \ref{eqfl}.

\begin{corollary}\label{cor-reg}
If a left-invariant $(p,q)$-flow solution $(\omega(t),g(t))$ on a Lie group has a finite singularity at $T_+$ (resp. $T_-$), then
$$
\int_0^{T_+} |P+Q^{ac}|\; dt =\infty \qquad \left(\mbox{resp.}\quad \int_{T_-}^0 |P+Q^{ac}|\; dt =\infty\right).
$$
\end{corollary}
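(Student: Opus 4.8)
The plan is to deduce Corollary \ref{cor-reg} from Proposition \ref{prop-reg}(ii) by transporting the quantity $\int |P_\mu+Q_\mu^{ac}|\,dt$ back to the original $(p,q)$-flow via Theorem \ref{eqfl}. The key observation is that Theorem \ref{eqfl} gives, for each $t$, a Lie algebra isomorphism $h(t):(\ggo,\lb)\to(\ggo,\mu(t))$ which is an equivalence of almost-hermitian structures between $(\lb,\omega(t),g(t))$ and $(\mu(t),\omega_0,g_0)$, and by \eqref{PQ} we have $P_\mu+Q_\mu^{ac}=h(P+Q^{ac})h^{-1}$. Thus the two operators $P+Q^{ac}$ and $P_\mu+Q_\mu^{ac}$ are conjugate at each time $t$, and the only issue is to compare their norms.

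First I would note that by Theorem \ref{eqfl} the maximal existence intervals of the $(p,q)$-flow and the $(p,q)$-bracket flow coincide, so a finite-time singularity of $(\omega(t),g(t))$ at $T_+$ is precisely a finite-time singularity of $\mu(t)$ at $T_+$, and Proposition \ref{prop-reg}(ii) applies to give $\int_0^{T_+}|P_\mu+Q_\mu^{ac}|\,dt=\infty$. The remaining step is to show that this forces $\int_0^{T_+}|P+Q^{ac}|\,dt=\infty$ as well. Here one must be slightly careful: conjugation by $h(t)$ is not an isometry of $\End(\ggo)$ with respect to the fixed inner product $\la A,B\ra=\tr AB^t$, so $|P+Q^{ac}|$ and $|P_\mu+Q_\mu^{ac}|$ need not be equal. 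However, both norms are equivalent to the operator norm up to a dimensional constant, and conjugation changes the operator norm by at most a factor $\|h\|\,\|h^{-1}\|$. A cleaner route, which avoids tracking $h$, is to use that $|P+Q^{ac}|$ (say the spectral/operator norm, or the Hilbert–Schmidt norm — all equivalent up to constants depending only on $n$) is invariant under the full unitary group and, more to the point, that $|P+Q^{ac}|$ computed with respect to $g(t)$ equals $|P_\mu+Q_\mu^{ac}|$ computed with respect to $g_0$, because $h(t)$ intertwines the two inner products and the two operators. So the natural formulation is: the integrand $|P+Q^{ac}|$ in the statement should be understood as the norm relative to the evolving metric $g(t)$, and under that reading the two integrands are literally equal pointwise in $t$, whence the conclusion is immediate.

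Concretely, the argument I would write is: let $(\omega(t),g(t))$ be the $(p,q)$-flow solution, let $\mu(t)$ be the corresponding $(p,q)$-bracket flow solution and $h(t)\in\Gl(\ggo)$ the family of Theorem \ref{eqfl}, so that $h(t):(\ggo,g(t))\to(\ggo,g_0)$ is a linear isometry intertwining $J(t)$ with $J_0$, and by \eqref{PQ}, $P_\mu+Q_\mu^{ac}=h(t)(P+Q^{ac})h(t)^{-1}$. Since $h(t)$ is an isometry of the respective inner products and an intertwiner, the norm of $P+Q^{ac}$ measured with $g(t)$ equals the norm of $P_\mu+Q_\mu^{ac}$ measured with $g_0$ for every $t$; hence the two integrals $\int_0^{T_+}|P+Q^{ac}|\,dt$ and $\int_0^{T_+}|P_\mu+Q_\mu^{ac}|\,dt$ coincide. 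If the $(p,q)$-flow has a finite-time singularity at $T_+$, then so does the bracket flow, and Proposition \ref{prop-reg}(ii) gives that the latter integral diverges; therefore the former diverges as well. The case of $T_-$ is identical.

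The main obstacle is essentially bookkeeping rather than mathematics: being precise about which metric (and hence which inner product on $\End(\ggo)$) is used to define $|P+Q^{ac}|$ in the statement of the corollary. If one insisted on measuring $P+Q^{ac}$ with the fixed background inner product $g_0$ instead of $g(t)$, then one would need the extra estimate that $\|h(t)\|\,\|h(t)^{-1}\|$ does not grow too fast — but in fact even there one only needs it to stay finite on $[0,T_+)$ with a controllable rate, or one can simply observe that a lower bound on $\int|P_\mu+Q_\mu^{ac}|\,dt$ together with the equivalence of norms still yields divergence. In any case, since $P$ and $Q$ are geometric curvature tensors of $(\omega(t),g(t))$, the geometrically natural norm is the one relative to $g(t)$, and with that understanding the proof reduces to invoking Theorem \ref{eqfl} to match integrands and then Proposition \ref{prop-reg}(ii) to conclude.
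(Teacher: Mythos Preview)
Your proposal is correct and follows exactly the route the paper intends: the paper simply states that the corollary ``follows from Theorem \ref{eqfl}'', and you have spelled out precisely how --- the equivalence $h(t):(\ggo,g(t))\to(\ggo,g_0)$ is an isometry intertwining $P+Q^{ac}$ with $P_\mu+Q_\mu^{ac}$ via \eqref{PQ}, so the two integrands agree and Proposition \ref{prop-reg}(ii) gives the conclusion. Your discussion of which metric defines the norm is the right point to address, and your resolution (the geometrically natural norm is relative to $g(t)$) is the intended reading.
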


This was proved for the pluriclosed flow of any compact manifold in \cite[Theorem 1.2]{StrTn3}, and may be considered as the analogous to N. Sesum's result on the Ricci flow (see \cite{Ssm}).

\begin{remark}\label{PQacort}
It follows from the triangular inequality that the integral of at least one of $|P|$, $|Q^{ac}|$ must blow up in a finite-time singularity.  Actually, as $P$ and $Q^{ac}$ are orthogonal (indeed, $\la P,Q^{ac}\ra=\la \unm(P+P^t),Q^{ac}\ra=\la P^c,Q^{ac}\ra=0$), one has that
$$
|P+Q^{ac}|=\left(|P|^2+|Q^{ac}|^2\right)^{1/2}.
$$
\end{remark}

We now compute the evolution of the Ricci and scalar curvatures of $g(t)$ along a $(p,q)$-solution $(\omega(t),g(t))$.

According to \cite[7.38]{Bss} (see also \cite[Section 2.3]{homRF}), the Ricci operator $\Ricci_\mu$ of $(G_\mu,g)$ is given by,
\begin{equation}\label{ricci}
\Ricci_\mu=\mm_{\mu}-\unm B_{\mu}-S(\ad_{\mu}{H_{\mu}}),
\end{equation}
where
\begin{equation}\label{Rmm}
\la\mm_{\mu},E\ra=-\unc\la\delta_\mu(E),\mu\ra, \qquad\forall E\in\End(\ggo),
\end{equation}
$B_{\mu}$ is the Killing form
$$
g(B_{\mu}X,Y)=\tr{\ad_\mu{X}\ad_\mu{Y}},\qquad\forall X,Y\in\ggo,
$$
$H_{\mu}\in\ggo$ is defined by
$$
g(H_{\mu},X)=\tr{\ad_\mu{X}}, \qquad\forall X\in\ggo,
$$
and
\begin{equation}\label{sym}
S:\End(\ggo)\longrightarrow\End(\ggo), \qquad S(A):=\unm(A+A^t),
\end{equation}
is the symmetric part of an operator.  The scalar curvature is therefore given by
$$
R_\mu=-\unc|\mu|^2-\unm\tr{B_\mu}-|H_\mu|^2.
$$

\begin{proposition}\label{eqs}
Let $\mu(t)$ be a $(p,q)$-bracket flow solution.
\begin{itemize}
\item[(i)] The Ricci operator $\Ricci=\Ricci_{\mu(t)}$ evolves by
\begin{align*}
\ddt\Ricci =& -\unm\Delta(P+Q^{ac})-\unm(B(P+Q^{ac})+(P+Q^{ac})^tB) \\
& -2S(\ad_{\mu}{S(P+Q^{ac})(H)}) - S([\ad_{\mu}{H},P+Q^{ac}]),
\end{align*}
where $\Delta:=S\circ\delta_\mu^t\delta_\mu$.
\item[ ]
\item[(ii)] The evolution of the scalar curvature $R=R_{\mu(t)}$ is given by
$$
\ddt R=2\la P+Q^{ac},\Ricci\ra +2\la P+Q^{ac},S(ad_\mu{H})\ra -2\la (P+Q^{ac})H,H\ra.
$$
\item[ ]
\item[(iii)] $\ddt |\mu|^2=-8\la P+Q^{ac},\mm\ra$.
\end{itemize}
\end{proposition}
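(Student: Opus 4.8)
The plan is to prove (iii) first (it is needed in (ii)), then (i), and finally (ii) by tracing the formula of (i). Throughout abbreviate $A:=P+Q^{ac}=P_{\mu(t)}+Q^{ac}_{\mu(t)}$, so that \eqref{BF} reads $\ddt\mu=\delta_\mu(A)=-\pi(A)\mu$. Part (iii) is then immediate: differentiating $|\mu|^2=\la\mu,\mu\ra$ gives $\ddt|\mu|^2=2\la\ddt\mu,\mu\ra=2\la\delta_\mu(A),\mu\ra$, and the defining property \eqref{Rmm} of $\mm$ with $E=A$ gives $\la\delta_\mu(A),\mu\ra=-4\la A,\mm\ra$, whence $\ddt|\mu|^2=-8\la P+Q^{ac},\mm\ra$.

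For (i) I would differentiate term by term the decomposition $\Ricci_\mu=\mm_\mu-\unm B_\mu-S(\ad_\mu H_\mu)$ of \eqref{ricci}. The delicate term is the moment-map one. Rewrite \eqref{Rmm} as $\la\mm_\mu,E\ra=\unc\la\pi(E)\mu,\mu\ra$ (using $\delta_\mu=-\pi$); since $\pi(E)$ is linear in $\mu$ and, for symmetric $E$, self-adjoint for the natural inner product \eqref{ipg} on $\Lambda^2\ggo^*\otimes\ggo$ (because $\pi(E)^t=\pi(E^t)$), differentiating along the flow and substituting $\ddt\mu=-\pi(A)\mu$ yields, for all symmetric $E$,
\[
\ddt\la\mm_{\mu(t)},E\ra=\unm\la\pi(E)\,\ddt\mu,\mu\ra=-\unm\la\pi(E)\pi(A)\mu,\mu\ra=-\unm\la\delta_\mu(A),\delta_\mu(E)\ra=-\unm\la\delta_\mu^t\delta_\mu(A),E\ra .
\]
As $\ddt\mm_{\mu(t)}$ and $\Delta(A)=S(\delta_\mu^t\delta_\mu(A))$ are both symmetric operators agreeing against every symmetric $E$, this gives $\ddt\mm_{\mu(t)}=-\unm\Delta(A)$.

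For the remaining two terms the $\Gl(\ggo)$-equivariances $B_{h\cdot\mu}=h^{-t}B_\mu h^{-1}$ and $H_{h\cdot\mu}=h^{-t}H_\mu$ (read off at once from the defining formulas of $B_\mu,H_\mu$) say infinitesimally that $dB_\mu(\pi(E)\mu)=-(E^tB_\mu+B_\mu E)$ and $dH_\mu(\pi(E)\mu)=-E^tH_\mu$; evaluating at $\ddt\mu=-\pi(A)\mu$ gives $\ddt B_{\mu(t)}=A^tB+BA$ and $\ddt H_{\mu(t)}=A^tH$. Differentiating $\ad_\mu H$ directly, using $\ddt\mu=\delta_\mu(A)$ and $\ddt H=A^tH$, yields $\ddt(\ad_\mu H)=\ad_\mu((A+A^t)H)+[\ad_\mu H,A]$, hence $\ddt S(\ad_\mu H)=2S(\ad_\mu S(A)H)+S([\ad_\mu H,A])$. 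Summing the three contributions with the signs dictated by \eqref{ricci} produces exactly the formula in (i).

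For (ii) I take the trace of the identity in (i). Using $\tr\circ S=\tr$, the relation $\delta_\mu(I)=\mu$, and the adjointness of $\delta_\mu$, one gets $\tr\Delta(A)=\la\delta_\mu(A),\mu\ra=-4\la A,\mm\ra$; using $B^t=B$, $\tr(BA+A^tB)=2\la A,B\ra$; $\tr S(\ad_\mu S(A)H)=g_0(H,S(A)H)=\la AH,H\ra$; and $\tr S([\ad_\mu H,A])=0$. Hence $\ddt R=2\la A,\mm\ra-\la A,B\ra-2\la AH,H\ra$, and substituting $\mm=\Ricci+\unm B+S(\ad_\mu H)$ from \eqref{ricci} cancels the $\la A,B\ra$ term and gives the stated formula. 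The main obstacle is the moment-map term of (i): establishing that its time-derivative is $-\unm\Delta(A)$ with the correct constant is precisely where the interplay between $\delta_\mu$, $\pi$, the self-adjointness $\pi(E)^t=\pi(E^t)$, and the symmetry of the curvature operators enters; once $\mm_\mu$, $B_\mu$ and $H_\mu$ are differentiated, (ii) is only bookkeeping and a mechanical trace computation.
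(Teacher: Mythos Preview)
Your argument is correct and follows essentially the same route as the paper: differentiate the decomposition \eqref{ricci} term by term along $\ddt\mu=\delta_\mu(A)$, then trace. The only difference is cosmetic: the paper quotes the variation formulas for $\mm_\mu$, $B_\mu$, $H_\mu$ from \cite{homRF}, whereas you derive them on the spot from the $\Gl(\ggo)$-equivariance $B_{h\cdot\mu}=h^{-t}B_\mu h^{-1}$, $H_{h\cdot\mu}=h^{-t}H_\mu$ and the moment-map identity $\la\mm_\mu,E\ra=\unc\la\pi(E)\mu,\mu\ra$ together with $\pi(E)^t=\pi(E^t)$; this makes your write-up slightly more self-contained but is not a different method.
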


\begin{proof}
We use formulas (36), (37) and (38) from \cite{homRF} to prove part (i) as follows:
\begin{align*}
\ddt\Ricci =& \ddt\mm -\unm\ddt B -\ddt S(\ad_\mu{H}) \\
=&  d \mm|_{\mu}\delta_{\mu}(P+Q^{ac}) -\unm dB|_\mu\delta_\mu(P+Q^{ac}) \\
& -S\left(\ad_{\delta_\mu(P+Q^{ac})}{H_\mu}
+\ad_\mu{dH|_\mu\delta_\mu(P+Q^{ac})}\right) \\
=&-\unm\Delta(P+Q^{ac}) -\unm(B(P+Q^{ac})+(P+Q^{ac})^tB) \\
& -S\left(\ad_\mu{(P+Q^{ac})H_\mu}+[\ad_\mu{H_\mu},P+Q^{ac}]+\ad_\mu{(P+Q^{ac})^tH_\mu}\right).
\end{align*}

We now use part (i) to prove (ii):
\begin{align*}
\ddt R =& \tr{\ddt \Ricci} = -\unm\tr{\delta_{\mu}^t\delta_{\mu}(P+Q^{ac})} -\tr{B(P+Q^{ac})} -2\tr{\ad_{\mu}{S(P+Q^{ac})(H)}} \\
=& -\unm\la \delta_\mu(P+Q^{ac}),\delta_\mu(I)\ra -\la P+Q^{ac},B\ra -2\la (P+Q^{ac})(H),H\ra \\
=&  2\la P+Q^{ac},\mm\ra -2\la P+Q^{ac},\unm B\ra -2\la P+Q^{ac},S(\ad_{\mu}{H})\ra \\
& +2\la P+Q^{ac},S(\ad_{\mu}{H})\ra -2\la (P+Q^{ac})(H),H\ra.
\end{align*}

Finally, we prove part (iii):
$$
\ddt |\mu|^2= 2\la\ddt\mu,\mu\ra =2\la\delta_{\mu}(P+Q^{ac}),\mu\ra = -8\la (P+Q^{ac},\mm\ra,
$$
concluding the proof of the proposition.
\end{proof}

Recall that only unimodular Lie groups can admit lattices, and since $G_\mu$ is unimodular if and only if $H_\mu=0$, one obtains in this case much simpler evolutions for $\Ricci$ and $R$ in parts (i) and (ii) of the above theorem, respectively.

\section{Self-similar solutions}\label{sec-self}

As the equivalence class of an almost-hermitian structure $(\omega_0,g_0)$ on a Lie algebra $\ggo$ is $\Aut(\ggo)\cdot(\omega_0,g_0)$, it is natural to consider a $(p,q)$-flow solution to be self-similar if it stays in this orbit, up to scaling, for all time $t$.  It follows from \eqref{inv-pq} that
$$
P(h\cdot\omega,h\cdot g)=h P(\omega,g)h^{-1}, \qquad Q(h\cdot\omega,h\cdot g)=h Q(\omega,g)h^{-1},
$$
for any $h\in\Aut(\ggo)$ (see \eqref{inv-pq}), and the same holds for the $J$-invariant and anti-J-invariant components of $P$ and $Q$.  Condition \eqref{PQc} is also assumed to hold in this section.

\begin{lemma}\label{self}
Let $(\omega(t),g(t))$ be the $(p,q)$-flow solution starting at $(\omega_0,g_0)$, and consider $c(t)>0$, $\vp(t)\in\Aut(\ggo)$, both differentiable on $t$, with $c(0)=1$, $\vp(0)=I$ and $D:=\vp'(0)\in\Der(\ggo)$.  Then the following conditions are equivalent.

\begin{itemize}
\item[(i)] The solution has the form:
$$
\left\{\begin{array}{l}
\omega(t)=c(t)\omega_0(\vp(t)\cdot,\vp(t)\cdot), \\ \\
g(t)=c(t)g_0(\vp(t)\cdot,\vp(t)\cdot).
\end{array}\right.
$$

\item[(ii)] The starting curvature tensors satisfy
$$
\left\{\begin{array}{l}
c'(0)I+\left(D-J_0D^tJ_0\right)= -2P(\omega_0,g_0), \\ \\
c'(0)I+\left(D+D^t\right)= -2 Q(\omega_0,g_0).
\end{array}\right.
$$
\item[(iii)] The solution is precisely given by
$$
\left\{\begin{array}{l}
\omega(t)=(c'(0)t+1)\omega_0( e^{s(t)D}\cdot,e^{s(t)D}\cdot),  \\ \\
g(t)=(c'(0)t+1) g_0( e^{s(t)D}\cdot,e^{s(t)D}\cdot),
\end{array}\right.
$$
where $s(t):=\frac{\log(c'(0)t+1)}{c'(0)}$ if $c'(0)\ne 0$ and $s(t)=t$ when $c'(0)=0$.
\end{itemize}
\end{lemma}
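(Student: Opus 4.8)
The plan is to prove the cyclic chain (i) $\Rightarrow$ (ii) $\Rightarrow$ (iii) $\Rightarrow$ (i), with essentially all of the content sitting in the step (ii) $\Rightarrow$ (iii). For (i) $\Rightarrow$ (ii), I would simply differentiate the ansatz at $t=0$. Using $c(0)=1$, $\vp(0)=I$ and $\vp'(0)=D$, one gets $\ddt|_{t=0}\omega(t)=c'(0)\,\omega_0+\omega_0(D\cdot,\cdot)+\omega_0(\cdot,D\cdot)=\omega_0\big((c'(0)I+D-J_0D^tJ_0)\cdot,\cdot\big)$, where the last equality uses $\omega_0(\cdot,D\cdot)=\omega_0(D^{t_{\omega_0}}\cdot,\cdot)$ with $D^{t_{\omega_0}}=-J_0D^tJ_0$ (see Section \ref{sec-not}). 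Comparing with $\ddt|_{t=0}\omega(t)=-2p(\omega_0,g_0)=-2\,\omega_0(P(\omega_0,g_0)\cdot,\cdot)$ and using that $\omega_0$ is non-degenerate yields the first identity of (ii); the second one comes out in the same way from $\ddt g=-2q$, now with $g_0(\cdot,D\cdot)=g_0(D^t\cdot,\cdot)$.

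For (ii) $\Rightarrow$ (iii), define $\bar\omega(t):=(c'(0)t+1)\,\omega_0(e^{s(t)D}\cdot,e^{s(t)D}\cdot)$ and $\bar g(t):=(c'(0)t+1)\,g_0(e^{s(t)D}\cdot,e^{s(t)D}\cdot)$. Since $D\in\Der(\ggo)$ we have $e^{s(t)D}\in\Aut(\ggo)$, so these form a compatible almost-hermitian family with $\bar\omega(0)=\omega_0$, $\bar g(0)=g_0$; by uniqueness of $(p,q)$-flow solutions it then suffices to check that $(\bar\omega,\bar g)$ solves the flow. The key elementary observation is that $s'(t)=\tfrac1{c'(0)t+1}$ in both cases $c'(0)\ne 0$ and $c'(0)=0$, so upon differentiating (using $\ddt e^{s(t)D}=s'(t)De^{s(t)D}$) the scalar factors cancel and one obtains $\ddt\bar\omega(t)=\omega_0\big((c'(0)I+D-J_0D^tJ_0)e^{s(t)D}\cdot,\,e^{s(t)D}\cdot\big)$, which by (ii) equals $-2\,\omega_0\big(P(\omega_0,g_0)e^{s(t)D}\cdot,\,e^{s(t)D}\cdot\big)$. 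On the other hand, the scaling invariance of $p$ (assumed in this section, cf. \eqref{PQc}) together with the diffeomorphism invariance of $p$ under the Lie algebra automorphism $e^{s(t)D}$ (cf. \eqref{inv-pq}) gives $p(\bar\omega(t),\bar g(t))=\omega_0\big(P(\omega_0,g_0)e^{s(t)D}\cdot,\,e^{s(t)D}\cdot\big)$, so $\ddt\bar\omega=-2p(\bar\omega,\bar g)$; and $\ddt\bar g=-2q(\bar\omega,\bar g)$ follows identically from the second identity of (ii). Hence $(\bar\omega,\bar g)=(\omega,g)$, which is (iii).

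Finally, (iii) $\Rightarrow$ (i) is immediate, since (iii) already exhibits $(\omega(t),g(t))$ in the shape of (i) with $c(t)=c'(0)t+1$ and $\vp(t)=e^{s(t)D}$ (and $s'(0)=1$ gives $\vp'(0)=D$, as required). The only place where some care is genuinely needed is (ii) $\Rightarrow$ (iii): one must keep precise track of the reparametrization $s(t)$, treat the $c'(0)\ne 0$ and $c'(0)=0$ regimes uniformly, and stay vigilant about which transpose — $(\cdot)^t$ with respect to $g_0$, or $(\cdot)^{t_{\omega_0}}=-J_0(\cdot)^tJ_0$ with respect to $\omega_0$ — occurs when translating between $2$-forms and symmetric tensors on one side and endomorphisms of $\ggo$ on the other.
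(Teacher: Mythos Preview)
Your proof is correct and follows essentially the same route as the paper's. The paper packages the derivative computation into an intermediate ``for all $t$'' identity (their \eqref{self2}) before evaluating at $t=0$ for (i)$\Rightarrow$(ii), and then checks that the specific pair $c(t)=c'(0)t+1$, $\vp(t)=e^{s(t)D}$ satisfies \eqref{self2} to get (ii)$\Rightarrow$(iii); you carry out the same verification directly without naming the intermediate condition, invoking scaling invariance \eqref{PQc}, equivariance \eqref{inv-pq}, and uniqueness of solutions in exactly the way the paper does implicitly.
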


\begin{proof}
It can be easily checked that part (i) is equivalent to
\begin{equation}\label{self2}
\left\{\begin{array}{l}
c'I+c\left(\vp^{-1}\vp'+(\vp^{-1}\vp')^{t_{\omega_0}}\right)= -2P(\omega_0,g_0), \qquad\forall t, \\ \\
c'I+c\left(\vp^{-1}\vp'+(\vp^{-1}\vp')^t\right)= -2Q(\omega_0,g_0), \qquad\forall t,
\end{array} \right.
\end{equation}
from which part (ii) follows by just evaluating at $t=0$.

We now assume (ii) and consider $(\omega,g)$ defined as in part (iii), that is, with $c(t)=c'(0)t+1$ and $\vp(t)=e^{s(t)D}$.  By using that $\vp^{-1}\vp'= s'\vp^{-1}D\vp=\tfrac{1}{c}D$, one obtains that condition \eqref{self2} holds for $(\omega,g)$ and so part (iii) follows.  This concludes the proof of the lemma as part (i) follows trivially from (iii).
\end{proof}

The following definition is motivated by Lemma \ref{self} and the terminology used in Ricci flow theory.

\begin{definition}\label{self-def}
An almost-hermitian structure $(\omega,g)$ on a Lie algebra $\ggo$ is called a $(p,q)$-{\it soliton} if for some $c\in\RR$ and $D\in\Der(\ggo)$,
$$
\left\{
\begin{array}{l}
P(\omega,g)=cI+\unm(D-JD^tJ), \\ \\
 Q(\omega,g)=cI+\unm(D+D^t).
\end{array}\right.
$$
We call $c$ the {\it cosmological constant} of the soliton.
\end{definition}

If $X_D$ is the vector field on the Lie group defined by the one-parameter subgroup of automorphisms $\vp_t$ with derivative $e^{tD}\in\Aut(\ggo)$, that is, $X_D(x)=\ddt|_0\vp_t(x)$ for
any $x\in G$, then $(\omega,g)$ is a $(p,q)$-soliton if and only if,
$$
\left\{
\begin{array}{l}
p(\omega,g)=c\omega+\unm(\omega(D\cdot,\cdot)+\omega(\cdot,D\cdot)) =c\omega-\unm\lca_{X_D}\omega, \\ \\
q(\omega,g)=cg+\unm(g(D\cdot,\cdot)+g(\cdot,D\cdot)) =cg-\unm\lca_{X_D}g,
\end{array}\right.
$$
where $\lca_{X}$ denotes Lie derivative.  The $(p,q)$-flow solution starting at a $(p,q)$-soliton $(\omega,g)$ is therefore given by
\begin{equation}\label{evsol}
\left\{
\begin{array}{l}
\omega(t)=(-2ct+1) e^{s(t)D}\cdot\omega, \\ \\
g(t)=(-2ct+1) e^{s(t)D}\cdot g,
\end{array}\right.
\end{equation}
where $s(t):=\frac{\log(-2ct+1)}{-2c}$ if $c\ne 0$ and $s(t)=t$ when $c=0$ (see Lemma \ref{self}).  We note that $(\omega,g)$ is defined as long as $-2ct+1>0$, thus obtaining
\begin{equation}\label{solint}
(T_-,T_+)=\left\{\begin{array}{lcl} (\tfrac{1}{2c},\infty), && c<0, \\ (-\infty,\tfrac{1}{2c}), && c>0, \\ (-\infty,\infty), && c=0. \end{array}\right.
\end{equation}

\begin{remark}\label{soliton}
In the general case, an almost-hermitian manifold $(M,\omega,g)$ will flow self-similarly according to the $(p,q)$-flow \eqref{eq}, in the sense that
$$
(\omega(t),g(t))=(c(t)\vp(t)^*\omega_0,c(t)\vp(t)^*g_0),
$$
for some $c(t)>0$ and $\vp(t)\in\Diff(M)$, if and only if
$$
\left\{
\begin{array}{l}
p(\omega,g)=c\omega+\lca_{X}\omega, \\ \\
q(\omega,g)=cg+\lca_{X}g,
\end{array}\right.
$$
where $c=-\unm c'(0)$ and the family $\psi(s)$ generated by the vector field $X$ on $M$ satisfies that $\vp(t)=\psi(-\unm s(t))$.  In analogy to the Ricci flow, this would be an appropriate definition of {\it soliton} for a given $(p,q)$-flow on almost-hermitian manifolds.  We note that this notion, when applied to left-invariant structures on Lie groups, is weaker than Definition \ref{self-def} (see Examples \ref{semi-CRF} and \ref{semi-SCF} for CRF and SCF, respectively).  However, if the full symmetry group $G$ of a soliton almost-hermitian structure $(M,\omega,g)$ (i.e. $G$ the set of all isometries which also preserve $\omega$) is transitive and $K$ is the isotropy subgroup at some point of $M$, then $M=G/K$ and one can prove in much the same way as in \cite[Theorem 3.1]{Jbl3} that there exists a one-parameter family of equivariant diffeomorphisms $\phi_t\in\Aut(G/K)$ (i.e. automorphisms of $G$ taking $K$ onto $K$) such that $(\omega(t),g(t))=(c(t)\phi_t^*g,c(t)\phi_t^*g)$ is a solution to the $(p,q)$-flow starting at $(\omega,g)$.  In the Ricci flow case, these special homogeneous solitons are called {\it semi-algebraic} (see \cite{Jbl3,homRS} for further information).
\end{remark}

If an almost-hermitian structure $(\omega,g)$ satisfies that
\begin{equation}\label{PQac}
\left\{\begin{array}{l}
P=c_1I+D_1, \\ \\
Q^{ac}=c_2I+D_2,
\end{array}\right.
\end{equation}
for some $c_i\in\RR$, $D_i\in\Der(\ggo)$, then $(\omega,g)$ is a $(p,q)$-soliton with $c=c_1+c_2$ and $D=D_1+D_2$.  This easily follows by using that $-JD_1^tJ=D_1^{t_{\omega}}=D_1$, $c_2I+D_2^c=0$, $D_2^t=D_2$ and \eqref{comp}.  However, the following apparently weaker condition, suggested by the strong relationship between a $(p,q)$-flow and its $(p,q)$-bracket flow given in Theorem \ref{eqfl}, is actually enough to get a soliton.

\begin{proposition}\label{PmasQac}
If an almost-hermitian structure $(\omega,g)$ satisfies that for some $c\in\RR$ and $D\in\Der(\ggo)$,
\begin{equation}\label{algsol}
P+Q^{ac}=cI+D,
\end{equation}
then $(\omega,g)$ is a $(p,q)$-soliton with same $c$ and $D$.
\end{proposition}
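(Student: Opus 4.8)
The plan is to take the hypothesis $P+Q^{ac}=cI+D$ with $D\in\Der(\ggo)$ and use it to force the stronger pair of identities in Definition \ref{self-def}. First I would decompose everything into its $J$-invariant and anti-$J$-invariant parts. Recall from Section \ref{sec-CF} that the compatibility condition \eqref{comp} gives $P^c=Q^c$, and that $Q^{ac}$ is $g$-symmetric (since $Q^t=Q$ forces $(Q^{ac})^t=Q^{ac}$) while $P$ satisfies $P^t=-JPJ$, i.e.\ $P=P^{t_\omega}$. So I would write $P=P^c+P^{ac}$, and use that $P^c$ is $g$-symmetric (being $J$-invariant and $\omega$-self-adjoint) while $P^{ac}$ is $g$-\emph{skew}; likewise $Q^{ac}=(Q^{ac})^c$-free is genuinely anti-$J$-invariant and $g$-symmetric. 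Then the right-hand side $cI+D$ decomposes: the $J$-invariant part is $cI+D^c$ and the anti-$J$-invariant part is $D^{ac}$, where $D^c=\unm(D-JDJ)$, $D^{ac}=\unm(D+JDJ)$.

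\textbf{Key steps.} Projecting \eqref{algsol} onto $J$-invariant vs.\ anti-$J$-invariant components gives
$$
P^c+Q^{ac,c}=cI+D^c,\qquad P^{ac}=D^{ac},
$$
but $Q^{ac}$ has no $J$-invariant part, so the first equation reads $P^c=cI+D^c$, and using $P^c=Q^c$ this is also $Q^c=cI+D^c$. The second reads $P^{ac}=D^{ac}$. Now I would further split each of these into its $g$-symmetric and $g$-skew parts. For the $J$-invariant identity: $P^c$ is $g$-symmetric, $cI$ is $g$-symmetric, so $D^c$ must be $g$-symmetric, hence $D^c=S(D^c)=\unm(D^c+(D^c)^t)$. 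Combined with $(D^c)^t$ relating to $D^t$ via $J$, one computes $\unm(D+D^t)$ has $J$-invariant part $\unm(D^c+(D^c)^t)=D^c$, so $Q=Q^c+Q^{ac}$ should be matched against $cI+\unm(D+D^t)$. For the anti-$J$-invariant identity $P^{ac}=D^{ac}$: since $P^{ac}$ is $g$-skew, $D^{ac}$ is $g$-skew, and one checks $-JD^tJ$ has anti-$J$-invariant part equal to $-\,$(the $g$-skew part of $D^{ac}$) worked out so that $\unm(D-JD^tJ)$ has $J$-invariant part $D^c$ and anti-$J$-invariant part $D^{ac}$ — matching $P=P^c+P^{ac}=cI+D^c+D^{ac}=cI+\unm(D-JD^tJ)$.

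So the skeleton is: (1) write $P=cI+\unm(D-JD^tJ)+X$ and $Q=cI+\unm(D+D^t)+Y$ for unknown corrections $X,Y$; (2) plug into \eqref{algsol} and subtract, using that $\unm(D-JD^tJ)+$(anti-$J$-part of $\unm(D+D^t)$) already accounts for $D$ on the anti-$J$-invariant side; (3) conclude $X^{ac}+Y^{ac}=0$ and $X^c=0=Y^c$ from the symmetry/skew-symmetry bookkeeping and from $P^c=Q^c$, hence $X=-Y^{ac}=-Y$; (4) argue that $X$ is forced to be zero. The last point is the crux: one needs an extra relation beyond \eqref{algsol} to pin down the split $P$ versus $Q^{ac}$ individually. \textbf{The main obstacle is exactly this}: a priori \eqref{algsol} only constrains the sum. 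I expect the resolution is that $X=P-(cI+\unm(D-JD^tJ))$ must simultaneously be $g$-skew (being anti-$J$-invariant and $\omega$-self-adjoint, forced by $P^{t_\omega}=P$ and $(cI+\unm(D-JD^tJ))^{t_\omega}=cI+\unm(D-JD^tJ)$) and, from $X=-Y$ with $Y^t=Y$ ($Q$ and $cI+\unm(D+D^t)$ both $g$-symmetric), also $g$-symmetric; a $g$-skew-and-symmetric operator is zero. That gives $P=cI+\unm(D-JD^tJ)$ and $Q^{ac}=cI+\unm(D+D^t)-Q^c$; finally $Q^c=P^c=$ the $J$-invariant part of $cI+\unm(D-JD^tJ)=cI+\unm(D+D^t)$'s $J$-part, so $Q=cI+\unm(D+D^t)$, and $(\omega,g)$ is a $(p,q)$-soliton with the same $c$ and $D$, which is what Definition \ref{self-def} demands.
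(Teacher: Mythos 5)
Your overall strategy (split into $J$-invariant/anti-$J$-invariant and $g$-symmetric/$g$-skew pieces, then use that an operator which is both $g$-symmetric and $g$-skew vanishes) is sound and does prove the proposition, but the central displayed projection is wrong as written. Since $(Q^{ac})^c=0$, the operator $Q^{ac}$ lives \emph{entirely} in the anti-$J$-invariant component; it does not disappear from it. So projecting \eqref{algsol} gives
$$
P^c=cI+D^c,\qquad P^{ac}+Q^{ac}=D^{ac},
$$
not $P^{ac}=D^{ac}$. Consequently the claims you derive from the erroneous equation are false in general: $D^{ac}$ need not be $g$-skew, and the anti-$J$-invariant part of $\unm(D-JD^tJ)$ is only the $g$-skew part $\unm\bigl(D^{ac}-(D^{ac})^t\bigr)$ of $D^{ac}$, not $D^{ac}$ itself (so the chain $P=cI+D^c+D^{ac}=cI+\unm(D-JD^tJ)$ does not hold as stated). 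The correct reading of the second projection, combined with your (correct) bookkeeping that $P^{ac}$ is $g$-skew while $Q^{ac}$ is $g$-symmetric, is that $P^{ac}$ and $Q^{ac}$ are respectively the skew and symmetric parts of $D^{ac}$; together with $P^c=Q^c=cI+D^c$ forcing $D^c=S(D^c)$, this yields exactly the two equations of Definition \ref{self-def}. Your final $X,Y$ skeleton, once one also notes that the $J$-invariant residual is $X^c=\unm(D^c-(D^c)^t)$ (a $g$-symmetric operator equal to a $g$-skew one, hence zero), does close the argument — so the proof is salvageable, but the middle section needs repair.

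For comparison, the paper reaches the same symmetrization in two lines by taking transposes of \eqref{algsol} rather than projecting: the $g$-transpose gives $-JPJ+Q^{ac}=cI+D^t$ and the $\omega$-transpose gives $P-Q^{ac}=cI-JD^tJ$; averaging each with \eqref{algsol} and using $P^c=Q^c$ produces the formulas for $P$ and $Q$ directly. This is the same underlying idea as yours (the four structural facts $P^{t_\omega}=P$, $Q^t=Q$, $P^c=Q^c$, $D\in\Der(\ggo)$ are exactly what is used), but packaging it as transposition avoids tracking the four components separately and sidesteps the bookkeeping where your writeup went astray.
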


\begin{proof}
By taking transpose of both sides in equality $P+Q^{ac}=cI+D$ relative to $g$ and $\omega$, we respectively obtain that
$$
-JPJ+Q^{ac}=cI+D^t, \qquad P-Q^{ac}=cI-JD^tJ.
$$
The formulas for $P$ and $Q$ in Definition \ref{self-def} now follow by averaging these two equations with the above one and using \eqref{comp}, concluding the proof.
\end{proof}

\subsection{Bracket flow evolution of solitons}\label{BFsoliton}
We now study how $(p,q)$-solitons evolve according to the $(p,q)$-bracket flow.  We refer to \cite[Section 4]{homRS} for an analysis of the same question in the case of the Ricci flow, where a more detailed treatment is given to all of the claims made below.

It is easy to check that the $(p,q)$-bracket flow solution starting at a $(p,q)$-soliton for which condition \eqref{algsol} holds stays in a straight line, it is simply given by
$$
\mu(t)=(-2ct+1)^{-1/2}\lb.
$$
This property actually characterizes the class of solitons defined by \eqref{algsol}.  Moreover, it also easily follows that they are precisely the fixed points of any (possibly normalized) $(p,q)$-bracket flow, and consequently, the only possible limits, backward and forward, of any normalized $(p,q)$-bracket flow solution $a(t)\mu(t)$. This fact and the equivalence between the $(p,q)$-flow and its the $(p,q)$-bracket flow (see Theorem \ref{eqfl}) suggest that solitons satisfying \eqref{algsol} might exhaust the class of all $(p,q)$-solitons (up to equivalence).

The converse of Proposition \ref{PmasQac} might fail.  Indeed, if $(\omega,g)$ is a $(p,q)$-soliton for the pair $(c,D)$, then it is straightforward to prove that \begin{equation}\label{PQA}
P+Q^{ac}=cI+D-A, \qquad\mbox{where}\qquad A:=\unm\left(D-D^t\right)^c.
\end{equation}
Note that $A$ is a skew-hermitian map which is not necessarily a derivation.  In this general case, the $(p,q)$-bracket flow evolves by
$$
\mu(t)=(-2ct+1)^{-1/2} e^{s(t)A}\cdot\lb.
$$
Indeed, according to Theorem \ref{eqfl}, we must solve
$$
\ddt h=-h(P+Q^{ac}) = -(-2ct+1)^{-1} h\left(cI+D-e^{s(t)D}Ae^{-s(t)D}\right),
$$
(see \eqref{evsol} and \eqref{PQc}) which gives $h(t)=(-2ct+1)^{1/2}\left(e^{s(t)A}e^{-s(t)D}\right)$, and so the formula for $\mu(t)$ follows.

We note that the normalized solution $\lambda(t):=\mu(t)/|\mu(t)|$ and all the limits of subsequences $\lambda(t_k)$ are in the compact orbit $\U(n)\cdot\lb$, up to scaling (recall that $A\in\ug(n)$).  On the other hand, by Kronecker's theorem, there exists a sequence $t_k$, with $t_k \rightarrow \infty$, such that $e^{t_kA} \rightarrow I$.  This implies that $\lambda(t_0+t_k) \underset{k \rightarrow \infty}\longrightarrow \lambda(t_0)$ for any $t_0\in\RR$ and thus the whole normalized solution is contained in its $\omega$-limit.  The absence of this kind of chaotic behavior for the $(p,q)$-bracket flow would imply that $A$ is a derivation of $\ggo$, and consequently, condition \eqref{algsol} would necessarily hold for any $(p,q)$-soliton.  This is analogous to the question in the Ricci flow case whether every semi-algebraic soliton is algebraic (see \cite{homRS}), which has been resolved, in the affirmative, by M. Jablonski in \cite{Jbl2}.

\section{Chern-Ricci flow}\label{sec-CRF}

Let $(M,J)$ be a complex manifold.  Given a hermitian metric $g$, the Ricci flow starting at $g$ may not preserve the hermitian condition since the Ricci tensor is not in general a $(1,1)$-tensor beyond the K\"ahler case.  A natural $(1,1)$-tensor to consider instead is the Chern-Ricci tensor $p(\cdot,J\cdot)$, where $p$ is the Chern-Ricci form (see Appendix \ref{chern}), and one obtains the so called {\it Chern-Ricci flow} (or CRF for short):
\begin{equation}\label{CRF}
\dpar \omega=-2p, \qquad\mbox{or equivalently}, \qquad \dpar g=-2p(\cdot,J\cdot),
\end{equation}
where as always $\omega=g(J\cdot,\cdot)$ is the fundamental form.  We refer to \cite{TstWnk} and the references therein for further information on this evolution equation.

A solution to \eqref{CRF} preserves the compatibility condition by \eqref{comp} (recall that $p=p^c$ and $q=p(\cdot,J\cdot)$), and it follows from \eqref{eqJ2} that
\begin{equation}\label{CRFJ}
J(t)\equiv J_0.
\end{equation}
In other words, the complex manifold remains fixed along the flow, and thus the hermitian condition is also preserved.  If $\omega$ is K\"ahler, then $p$ is precisely the Ricci form and CRF becomes the K\"ahler-Ricci flow.

We note that the corresponding Chern-Ricci operators satisfy $P=Q$ (see \eqref{defPQ}) and in particular, $P=P^t=P^c$ is a symmetric and hermitian map with respect to $(g,J)$.

\vs

Let us now consider the CRF on Lie groups.  In this case, the Chern-Ricci form is given by $p=-\unm\tr{J\ad{[\cdot,\cdot]}}+\unm\tr{\ad{J[\cdot,\cdot]}}$ (see \eqref{CRform}).  Since $p$ depends only on $J$, we obtain from \eqref{CRFJ} that along a solution to \eqref{CRF}, $p(t)\equiv p_0$.  This implies that the CRF-solution starting at $(\omega_0,g_0)$ is simply given by
\begin{equation}\label{CRFsol}
\omega(t)=\omega_0-2tp_0, \qquad\mbox{or equivalently}, \qquad g(t)=g_0-2tp_0(\cdot,J\cdot).
\end{equation}
Therefore,
$$
\omega(t)=\omega_0((I-2tP_0)\cdot,\cdot),
$$
and so the solution exists as long as the hermitian map $I-2tP_0$ is positive.  It follows that the maximal interval of time existence $(T_-,T_+)$ of $\omega(t)$ is given by
\begin{equation}\label{CRFint}
T_+=\left\{\begin{array}{lcl} \infty, & \quad\mbox{if}\;P_0\leq 0, \\ \\ 1/(2p_+), & \quad\mbox{otherwise,}\end{array}\right. \qquad
T_-=\left\{\begin{array}{lcl} -\infty, & \quad\mbox{if}\;P_0\geq 0, \\ \\ 1/(2p_-), & \quad\mbox{otherwise,}\end{array}\right.
\end{equation}
where $p_+$ is the maximum positive eigenvalue of the Chern-Ricci operator $P_0$ of $\omega_0$ and $p_-$ is the minimum negative eigenvalue (recall that $p_0=\omega_0(P_0\cdot,\cdot)$).

It is easy to see that
$$
P(t)=(I-2tP_0)^{-1}P_0,
$$
from which follows that the family $h(t)\in\Gl(\ggo)$ defined in Lemma \ref{h} is given by $h(t)=(I-2tP_0)^{1/2}$.  By Theorem \ref{eqfl}, (iv), the solution to the Chern-Ricci bracket flow $\ddt\mu=\delta_\mu(P_\mu)$ is given by
$$
\mu(t)=(I-2tP_0)^{1/2}\cdot\lb,
$$
and hence relative to any orthonormal basis $\{ e_1,\dots,e_{2n}\}$ of eigenvectors of $P_0$, say with eigenvalues $\{ p_1,\dots,p_{2n}\}$, the structure coefficients of $\mu(t)$ are given by
\begin{equation}\label{muijk}
\mu_{ij}^k(t)=\left(\frac{1-2tp_k}{(1-2tp_i)(1-2tp_j)}\right)^{1/2} c_{ij}^k,
\end{equation}
where $c_{ij}^k$ are the structure coefficients of the Lie bracket $\lb$ of $\ggo$ (i.e. $[e_i,e_j]=\sum c_{ij}^ke_k$).

This implies that that the normalized solution $\mu(t)/|\mu(t)|$ always converges, as $t\to T_{\pm}$, toward a Lie bracket $\lambda$ such that $(G_\lambda,\omega_0,g_0)$ is a Chern-Ricci soliton (see Section \ref{BFsoliton}).

The evolution of the Chern scalar curvature $\tr{P}$ is also easy to understand.  Indeed,
$$
\tr{P(t)}=\sum_{i=1}^{2n} \frac{p_i}{1-2tp_i}, \qquad \ddt\tr{P(t)}=\sum_{i=1}^{2n} \frac{2p_i^2}{(1-2tp_i)^2}\geq 0,
$$
and hence $\tr{P(t)}$ is strictly increasing unless $P(t)\equiv 0$ (i.e. $\omega(t)\equiv \omega_0$).  Moreover, $\tr{P}$ must blow up in finite time singularities: if $T_+<\infty$ (resp. $T_->-\infty$), then
$$
\int_0^{T_+}\tr{P(t)}\; dt=\infty \qquad \left(\mbox{resp.} \quad \int_{T_-}^0\tr{P(t)}\; dt=-\infty\right).
$$
This improves in the case of CRF the regularity result given in Corollary \ref{cor-reg} for a general $(p,q)$-flow.  We also note that nevertheless, $$
\tr{P(t)}\leq \frac{C}{T_+-t},
$$
for some constant $C>0$, which is the claim of a well-known general conjecture for the K\"ahler-Ricci flow (see e.g. \cite[Conjecture 7.7]{SngWnk}.

From Definition \ref{self-def} and the fact that $P=Q$, we obtain that $(\omega,g)$ is a Chern-Ricci soliton if and only if its Chern-Ricci operator satisfies
\begin{equation}\label{CRsoldef}
P=cI+\unm(D+D^t), \qquad\mbox{for some}\quad c\in\RR, \quad D=D^c\in\Der(\ggo).
\end{equation}

It follows from \eqref{solint} and \eqref{CRFint} that either $P\leq 0$ ($c\leq 0$) or $P\geq 0$ ($c\geq 0$) for any Chern-Ricci soliton; in particular, $c=0$ if and only if $(\omega,g)$ is Chern-Ricci flat.  We next show that much more can be said on $P$ for Chern-Ricci solitons.

\begin{proposition}\label{CR-sol}
If a left-invariant hermitian structure $(\omega,g)$ on a Lie group is a Chern-Ricci soliton with cosmological constant $c\in\RR$, then its Chern-Ricci operator $P$ can have only $0$ and $c$ as eigenvalues.
\end{proposition}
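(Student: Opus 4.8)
The plan is to compute the Chern-Ricci operator $P(t)$ of the CRF-solution in two different ways and compare its eigenvalues. Write $(\omega_0,g_0):=(\omega,g)$ and let $p_1,\dots,p_{2n}$ be the eigenvalues of $P_0:=P(\omega_0,g_0)$, all real since $P_0$ is $g_0$-symmetric; recall from \eqref{CRsoldef} that $P_0=cI+\unm(D+D^t)$ for some $D=D^c\in\Der(\ggo)$. Since $(\omega_0,g_0)$ is a Chern-Ricci soliton, by \eqref{evsol} the CRF-solution starting at it is
$$
(\omega(t),g(t))=(-2ct+1)\,e^{s(t)D}\cdot(\omega_0,g_0),\qquad t\in(T_-,T_+),
$$
where $(T_-,T_+)$ is the maximal interval of existence and $-2ct+1>0$ there by \eqref{solint}. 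In particular, through the automorphism $e^{s(t)D}\in\Aut(\ggo)$ each $(\omega(t),g(t))$ is equivalent to $(\omega_0,g_0)$ rescaled by the factor $-2ct+1$.

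On the one hand, the scaling behaviour \eqref{PQc} and the equivariance \eqref{inv-pq} of the Chern-Ricci operator then give
$$
P(t)=\tfrac{1}{-2ct+1}\,e^{s(t)D}P_0\,e^{-s(t)D},
$$
so the eigenvalues of $P(t)$ are $\dfrac{p_i}{-2ct+1}$, $i=1,\dots,2n$. On the other hand, from the explicit CRF-solution on a Lie group we have $P(t)=(I-2tP_0)^{-1}P_0$, whose eigenvalues are $\dfrac{p_i}{1-2tp_i}$. Since both formulas describe the same operator for every $t\in(T_-,T_+)$, comparing the two (unordered, with multiplicity) lists of eigenvalues and multiplying through by $-2ct+1>0$ yields the identity of multisets
$$
\left\{\tfrac{(1-2ct)\,p_i}{1-2tp_i}:i=1,\dots,2n\right\}=\{p_1,\dots,p_{2n}\},\qquad\forall\,t\in(T_-,T_+).
$$

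To finish, fix $i$ and set $f_i(t):=\dfrac{(1-2ct)\,p_i}{1-2tp_i}$, a continuous (pole-free rational) function on $(T_-,T_+)$. By the last identity $f_i(t)$ belongs to the finite set $\{p_1,\dots,p_{2n}\}$ for every $t$, so, being continuous on a connected interval, it is constant, equal to $f_i(0)=p_i$. Clearing denominators in $(1-2ct)\,p_i=(1-2tp_i)\,p_i$ gives $2t\,p_i(p_i-c)=0$ for all $t$, hence $p_i(p_i-c)=0$, i.e.\ $p_i\in\{0,c\}$. As $i$ is arbitrary, $P$ admits only $0$ and $c$ as eigenvalues. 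There is no real obstacle here; the only slightly delicate point is the step ``continuous and finite-valued on a connected interval, hence constant'', which just uses that $(T_-,T_+)$ is an interval, together with the bookkeeping of the scaling/automorphism equivariance of $P$. Alternatively, one may phrase the comparison as: $(I-2tP_0)^{-1}P_0$ and $\tfrac{1}{1-2ct}P_0$ are conjugate $g_0$-symmetric operators, hence cospectral, and then argue as above.
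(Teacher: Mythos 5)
Your argument is correct and is essentially the paper's: both proofs play the self-similar form $(\omega(t),g(t))=(1-2ct)\,e^{s(t)D}\cdot(\omega_0,g_0)$ of the soliton evolution off against the explicit linear solution $\omega(t)=\omega_0-2tp_0$ and extract the constraint $p_i(p_i-c)=0$ on each eigenvalue. The only (cosmetic) difference is the endgame: the paper encodes the comparison as the operator identity $e^{-2s(t)P}=I-2tP$ and differentiates twice at $t=0$, whereas you compare the spectra of the two expressions for $P(t)$ and use a continuity/discreteness argument to fix the matching of eigenvalues.
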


\begin{proof}
According to \eqref{evsol}, the evolution is given by $\omega(t)=(-2ct+1)e^{s(t)D}\cdot\omega$, but since \eqref{CRFsol} must also hold, it is straightforward to obtain that
$$
e^{-2s(t)P}=I-2tP, \qquad\forall t.
$$
This implies that $e^{-2s(t)p_i}=1-2tp_i$ for any eigenvalue $p_i$ of $P$, from which we obtain that $cp_i-p_i^2=0$ by taking second derivatives.
\end{proof}

The following structural results for Chern-Ricci solitons, which are in particular valid for K\"ahler-Ricci solitons, may provide a starting point for approaching the classification problem.

\begin{proposition}\label{CR-sol2}
Let $(G,\omega,g)$ be a hermitian Lie group with Lie algebra $\ggo$ and Chern-Ricci operator $P\ne 0$.  Then the following conditions are equivalent.

\begin{itemize}
\item[(i)] $\omega$ is a Chern-Ricci soliton with cosmological constant $c$.
\item[ ]
\item[(ii)] $P=cI+D$, for some $D\in\Der(\ggo)$.
\item[ ]
\item[(iii)] The eigenvalues of $P$ are all either equal to $0$ or $c$, the kernel $\kg=\Ker{P}$ is an abelian ideal of $\ggo$ and its orthogonal complement $\kg^\perp$ (i.e. the $c$-eigenspace of $P$) is a Lie subalgebra of $\ggo$ (in particular, $\ggo$ is the semidirect product $\ggo=\kg^\perp\ltimes\kg$ and $c$ is nonzero).
\end{itemize}
\end{proposition}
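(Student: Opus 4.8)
The plan is to prove the cycle of implications (i)$\Rightarrow$(iii)$\Rightarrow$(ii)$\Rightarrow$(i), with essentially all of the work in the first implication. Throughout, observe that by Proposition \ref{CR-sol} the only possible eigenvalues of $P$ are $0$ and $c$, and since $P\ne 0$ this forces $c\ne 0$. Writing $\kg:=\Ker P$ for the $0$-eigenspace and $\hg:=\kg^\perp$ for the $c$-eigenspace, we thus get an orthogonal, $J$-invariant decomposition $\ggo=\kg\oplus\hg$ with $P=c\,\proy_\hg$. By \eqref{CRsoldef}, the soliton hypothesis in (i) says $P=cI+\unm(D+D^t)$ for some $D=D^c\in\Der(\ggo)$, equivalently $\unm(D+D^t)=P-cI=-c\,\proy_\kg$.

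For (i)$\Rightarrow$(iii), the crucial step is to show that $D$ is normal, i.e. $[D,D^t]=0$. To see this I would compute the CRF-solution $g(t)$ starting at $(\omega,g)$ in two ways. On one hand \eqref{CRFsol} gives $g(t)=g((I-2tP)\cdot,\cdot)$; on the other hand, since a Chern-Ricci soliton is in particular a $(p,q)$-soliton for CRF with the same $c$ and $D$, formula \eqref{evsol} gives $g(t)=(-2ct+1)\,g\big(e^{-s(t)D^t}e^{-s(t)D}\cdot,\cdot\big)$ (using $(e^{-sD})^t=e^{-sD^t}$). By uniqueness of the solution these agree, so $I-2tP=(-2ct+1)\,e^{-s(t)D^t}e^{-s(t)D}$ for $t$ near $0$. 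Substituting $P=c\,\proy_\hg$ and $e^{-2cs(t)}=1-2tc$ and cancelling the nonzero factor $1-2tc$, this reduces to $e^{2cs(t)}\proy_\kg+\proy_\hg=e^{-s(t)D^t}e^{-s(t)D}$; since $2c\,\proy_\kg=-(D+D^t)$, the left-hand side equals $e^{2cs(t)\proy_\kg}=e^{-s(t)(D+D^t)}$, so $e^{-s(D+D^t)}=e^{-sD^t}e^{-sD}$ for all small $s$, and comparing second derivatives at $s=0$ yields $[D,D^t]=0$.

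Now, since $D$ is normal, decompose $D=S+N$ with $S=\unm(D+D^t)=-c\,\proy_\kg$ symmetric and $N=\unm(D-D^t)$ skew-symmetric; here $[S,N]=0$, so $N$ (hence $D$) commutes with $\proy_\kg$ and therefore preserves both $\kg$ and $\hg$, with $D|_\kg=-cI+N|_\kg$ and $D|_\hg=N|_\hg$, the operators $N|_\kg,N|_\hg$ being skew-symmetric and hence diagonalizable over $\CC$ with purely imaginary eigenvalues. Thus, after complexifying, every eigenvalue of $D$ on $\kg_\CC$ has real part $-c$ and every eigenvalue on $\hg_\CC$ has real part $0$. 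Using the standard fact that a derivation grades brackets, $[V_\lambda,V_\mu]\subseteq V_{\lambda+\mu}$ for eigenspaces $V_\lambda,V_\mu$ of $D$ (with $V_{\lambda+\mu}=\{0\}$ when $\lambda+\mu$ is not an eigenvalue), and that $c\ne 0$: brackets of $\kg_\CC$ with itself would have to live in eigenvalues of real part $-2c$, which do not occur, so $[\kg,\kg]=0$; brackets of $\kg_\CC$ with $\hg_\CC$ have real part $-c$, so $[\kg,\hg]\subseteq\kg$; and brackets of $\hg_\CC$ with itself have real part $0$, so $[\hg,\hg]\subseteq\hg$. Hence $\kg$ is an abelian ideal, $\hg$ is a subalgebra, $\ggo=\hg\ltimes\kg$, and $c\ne 0$, which is (iii).

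Finally, (iii)$\Rightarrow$(ii): setting $D:=P-cI$, which equals $0$ on $\hg$ and $-cI$ on $\kg$, a direct check on the brackets $[\hg,\hg]\subseteq\hg$, $[\hg,\kg]\subseteq\kg$, $[\kg,\kg]=0$ shows $D\in\Der(\ggo)$, so $P=cI+D$. And (ii)$\Rightarrow$(i): the operator $D=P-cI$ is symmetric and hermitian because $P=P^t=P^c$ is, so $D=D^c$ and $P=cI+\unm(D+D^t)$ with $D=D^c\in\Der(\ggo)$, which is precisely \eqref{CRsoldef}. The main obstacle is the normality step $[D,D^t]=0$; the rest is the standard, purely algebraic eigenvalue-grading argument for derivations together with the easy converses.
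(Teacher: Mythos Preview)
Your argument is correct, but the route differs from the paper's. The paper proves the cycle (i)$\Rightarrow$(ii)$\Rightarrow$(iii)$\Rightarrow$(ii), and the step (i)$\Rightarrow$(ii) is handled dynamically: since the explicit formula $\mu(t)=(I-2tP_0)^{1/2}\cdot\lb$ makes $\mu(t)/|\mu(t)|$ converge as $t\to T_\pm$, the chaotic behavior of Section~\ref{BFsoliton} is ruled out, forcing $A=\unm(D-D^t)^c\in\Der(\ggo)$ and hence \eqref{algsol}, i.e.\ $P=cI+\tilde D$ for some $\tilde D\in\Der(\ggo)$. The passage (ii)$\Rightarrow$(iii) is then immediate because $P-cI$ is a \emph{symmetric} derivation with eigenvalues $0,-c$, so the grading argument is one line.

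Your approach replaces the bracket-flow/limit step by a direct comparison of the two closed formulas for $g(t)$ (namely \eqref{CRFsol} and \eqref{evsol}), extracting $e^{-s(D+D^t)}=e^{-sD^t}e^{-sD}$ and hence $[D,D^t]=0$; you then run the grading argument with the (not necessarily symmetric) derivation $D$ itself, after complexifying. This is more hands-on and entirely self-contained --- it avoids invoking convergence of the normalized bracket flow and the ``absence of chaos'' discussion --- at the cost of a slightly heavier eigenvalue bookkeeping. The paper's route, by contrast, embeds the result in the general $(p,q)$-soliton framework and shows, conceptually, that CRF-solitons are automatically algebraic (i.e.\ satisfy \eqref{algsol}). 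Your implications (iii)$\Rightarrow$(ii) and (ii)$\Rightarrow$(i) coincide with the paper's.
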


\begin{proof}
As $\mu(t)/|\mu(t)|$ always converges as $t\to T_\pm$, the Chern-Ricci bracket flow never develops the chaotic behavior described at the end of Section \ref{BFsoliton}, and so any Chern-Ricci soliton must satisfy \eqref{algsol}.  This shows that part (ii) follows from (i) (the converse is trivial).

If we assume (ii), then the spectrum of $P$ is contained in $\{0,c\}$ by Proposition \ref{CR-sol}.  The rest of part (iii) easily follows by using that $P-cI\in\Der(\ggo)$.  Conversely, if (iii) holds, then $P-cI$ is clearly a derivation of $\ggo$ and so part (ii) follows.
\end{proof}

\begin{example}\label{semi-CRF}
The direct product $G=G_1\times G_2$ of a hermitian Lie group $G_1$ with Chern-Ricci operator $P_1=c_1I+D_1\ne 0$, $c_1\in\RR$, $D_1\in\Der(\ggo_1)$ and a Chern-Ricci flat hermitian nonabelian solvable Lie group $G_2$ (i.e. $P_2=0$) is a soliton in the more general sense as in Remark \ref{soliton} by defining the diffeomorphisms $\vp(t)$ to be $e^{s(t)D_1}$ on $G_1$ and the identity on $G_2$.  However, $G$ is not a Chern-Ricci soliton as in \eqref{CRsoldef}.  Indeed, if $P=cI+\unm(D+D^t)$ for some $c\in\RR$ and $D\in\Der(\ggo)$, then $D|_{\ggo_2}$ is normal and so its transpose is also a derivation, which implies that $c=0$ as $\ggo_2$ is nonabelian.  This implies that $P_1=\unm(D|_{\ggo_1}+(D|_{\ggo_1})^t)$ and hence $P_1=0$ by Proposition \ref{CR-sol2}, a contradiction.
\end{example}

It is proved in \cite{CRF} that $P=0$ if $G$ is nilpotent, and thus any invariant hermitian structure on a compact nilmanifold
is a fixed point for the CRF.  It is also studied in \cite{CRF} how is the limit $\lambda_\pm$ related to the starting point, and the existence problem for Chern-Ricci solitons on $4$-dimensional solvable Lie groups.

\section{Symplectic curvature flow}\label{sec-SCF}

Let $(M,\omega,g,J)$ be an almost-K\"ahler manifold, i.e. an almost-hermitian manifold such that $d\omega=0$.  With K\"ahler-Ricci flow as a motivation, it is natural to evolve the symplectic structure $\omega$ in the direction of the Chern-Ricci form $p$ (see Appendix \ref{chern}), but since in general $p\ne p^c$, one is forced to flow the metric $g$ as well in order to preserve compatibility.  One may therefore consider the following $(p,q)$-flow equation for a one-parameter family $(\omega(t),g(t))$ of almost-K\"ahler structures:
\begin{equation}\label{SCFeq}
\left\{\begin{array}{l}
\dpar\omega=-2p, \\ \\
\dpar g=-2p^c(\cdot,J\cdot)-2\ricac,
\end{array}\right.
\end{equation}
where $p$ is the Chern-Ricci form of $(\omega,g)$ and $\ricci$ is the Ricci tensor of $g$.  This equation was introduced in \cite{StrTn2} and is called the {\it symplectic curvature flow} (or SCF for short).  It is evident that SCF preserves the compatibility condition \eqref{comp} (note that $Q=P^c+\Ricci^{ac}$) and the almost-K\"ahler condition (recall that $dp=0$).  According to \eqref{eqJ2}, SCF makes $J$ to evolve as follows:
$$
\dpar J=-2JP^{ac}-2J\Ricac = -2JP^{ac} + [\Ricci,J],
$$
where $\Ricci$ denotes the Ricci operator of the metric $g$ (i.e. $\ricci=g(\Ricci\cdot,\cdot)$).  We note that if $J_0$ is integrable, i.e. $(\omega_0,g_0)$ K\"ahler, then $J=J_0$, $\ricac=0$ and $p^c(\cdot,J\cdot)=\ricci$ for all $t$, and so SCF becomes precisely the K\"ahler-Ricci flow for $g(t)$.

Let $(\omega(t),g(t))$ be a SCF-solution and assume that $M$ is compact.  In analogy with the theorem of Tian-Zhang for the K\"ahler-Ricci flow (see \cite{TnZhn}), it is conjectured in \cite{StrTn2} that $(\omega(t),g(t))$ exists smoothly as long as the cohomology class $[\omega(t)]\in H^2(M,\RR)$ belongs to the cone $\cca$ of all classes which can be represented by a symplectic form.

Since $\omega(t)$ is a deformation equivalence, all the corresponding time-dependent symplectic (or complex) vector bundles are pairwise isomorphic and thus the first Chern class is constant in time: $c_1(M,\omega(t))\equiv c_1(M,\omega_0)$.  Recall that $c_1(M,\omega)$ is well defined as $c_1(M,J)$ for any compatible almost-complex structure $J$ with same orientation.  By using that $[p]=2\pi c_1(M,\omega)$, one obtains that the class evolves by $\ddt [\omega]=-4\pi c_1(M,\omega_0)$, and hence
$$
[\omega(t)]=-4t\pi c_1(M,\omega_0)+[\omega_0].
$$
The conjecture mentioned above is therefore equivalent to say that the maximal existence time $T_+$ is given by
$$
T_+=\sup\{ t>0:[\omega(t)]=-4t\pi c_1(M,\omega_0)+[\omega_0]\in\cca\}.
$$
Assume now that $c_1(M,\omega_0)=0$.  The solution is therefore expected to be immortal, i.e. $T_+=\infty$.  Furthermore, since $[\omega(t)]\equiv [\omega_0]$, it follows from Moser Theorem (see e.g. \cite[Theorem 7.3]{Cnn} or \cite[Theorem 3.17]{McDSlm}) that
$$
\omega(t)=\vp(t)^*\omega_0, \qquad\mbox{for some}\qquad \vp(t)\in\Diff(M), \quad \vp(0)=id.
$$
In other words, under the vanishing of the first Chern class, the symplectic manifold $(M,\omega(t))$ does not really change along a SCF-solution $(\omega(t),g(t))$.  This in particular holds for invariant almost-K\"ahler structures on compact quotients $M=G/\Gamma$ of Lie groups, as the tangent bundle is in this case trivial and so $c_1(M)=0$.

Since $Q^{ac}=\Ricac$ for SCF, the bracket flow is given by
\begin{equation}\label{SCF-BF}
\ddt\mu=\delta_\mu(P_\mu+\Ricci_\mu^{ac}), \qquad\mu(0)=\lb,
\end{equation}
and the evolution of the scalar curvature and the square norm of the bracket, in the unimodular case, are respectively given by
$$
\ddt R=2\la P^c,\Ricci\ra + 2|\Ricac|^2, \qquad \ddt |\mu|^2=-8\la P^c+\Ricac,\mm\ra.
$$
Recall from \eqref{CRform} that the Chern-Ricci form of the almost-K\"ahler structure $(\mu,\omega_0,g_0,J_0)$ is given by
\begin{equation}\label{SCF-CRform}
p(X,Y)=-\unm\tr\left(J_0\ad_\mu{\mu(X,Y)}\right) + \unm\tr\left(\ad_\mu{J_0\mu(X,Y)}\right), \qquad\forall X,Y\in\ggo.
\end{equation}
We note that the second term vanishes for unimodular Lie groups, and that $p=0$ in the $2$-step nilpotent case.

According to Definition \ref{self-def}, we say that an almost-K\"ahler structure $(\omega,g)$ on a Lie algebra $\ggo$ is a {\it SCF-soliton} if for some $c\in\RR$ and $D\in\Der(\ggo)$,
\begin{equation}\label{SCF-sol}
\left\{
\begin{array}{l}
P=cI+\unm(D-JD^tJ), \\ \\
P^c+\Ricac=cI+\unm(D+D^t).
\end{array}\right.
\end{equation}
We note that a SCF-soliton $(\omega,g)$ is {\it static} (i.e. $p=c\omega$ and $\ricac=0$, and so they evolve by $(\omega(t),g(t))=(-2ct+1)(\omega,g)$) if and only if $D\in \ug(n)=\spg(\omega)\cap\sog(g)$, if and only if $e^{tD}\in U(n)=\Spe(\omega)\cap\Or(g)$ for all $t$.  The sufficient condition for a soliton given in \eqref{algsol} becomes
\begin{equation}\label{SCF-algsol}
P+\Ricac=cI+D.
\end{equation}
By using this condition, it has been found in \cite{SCFmuA} a SCF-soliton on most of symplectic $4$-dimensional Lie groups, where it is also studied the SCF-evolution of solvable Lie algebras with a codimension-one abelian ideal in any dimension.  E. Fern\'andez-Culma \cite{Frn} found a SCF-soliton on most $2$ and $3$-step symplectic nilpotent Lie groups of dimension $6$.

\begin{example}\label{semi-SCF}
The direct product $G=G_1\times G_2$ of an almost-K\"ahler nilpotent Lie group $G_1$ with $P_1=0$ (e.g. $2$-step nilpotent) and $\Ricci_1^{ac}=c_1I+D_1\ne 0$, $c_1\in\RR$, $D_1\in\Der(\ggo_1)$, and a flat almost-K\"ahler nonabelian Lie group $G_2$ (i.e. $P_2=0$ and $\Ricci_2^{ac}=0$) is a soliton in the more general sense as in Remark \ref{soliton} by defining the diffeomorphisms $\vp(t)$ to be $e^{s(t)D_1}$ on $G_1$ and the identity on $G_2$.  However, $G$ is not a SCF-soliton as in \eqref{SCF-sol}.  Indeed, if $P^c+\Ricac=cI+\unm(D+D^t)$, for some $c\in\RR$ and $D\in\Der(\ggo)$, then $D|_{\ggo_2}$ is normal and so its transpose is also a derivation, which implies that $c=0$ as $\ggo_2$ is nonabelian.  It follows from \eqref{SCF-sol} that $\Ricci_1^{ac}=((D|_{\ggo_1})^t)^{ac}$ and hence $\Ricci_1^{ac}=0$ as $\Ricci$ is orthogonal to any derivation by \cite[Lemma 6.1,(iii)]{nilricciflow}, a contradiction.
\end{example}

\subsection{Anti-complexified Ricci flow}
In the case when $p(\omega,g)=0$ for all time $t$, the SCF-solution $(\omega(t),g(t))$ satisfies that $\omega(t)\equiv\omega_0$ and $g(t)$ is a solution to the {\it anti-complexified Ricci flow} (acRF for short) studied in \cite{LeWng}, defined by
$$
\dpar g=-2\ricac.
$$
Thus SCF becomes in this case an evolution for compatible metrics on a fixed symplectic manifold $(M,\omega_0)$.  This for instance happens on any $2$-step nilmanifold.

Since $P=0$ and $Q^{ac}=\Ricci^{ac}$ along an acRF-solution, according to Proposition \ref{eqs}, (ii), the evolution of the scalar curvature in the unimodular case is simply given by
$$
\ddt R = 2|\Ricci^{ac}|^2.
$$
It follows from Corollary \ref{cor-reg} that $R$ must blow up in a finite-time singularity for acRF.  Indeed,
$$
\int_0^{T_+}|\Ricci^{ac}|\; dt \leq \int_0^{T_+}\frac{1+|\Ricci^{ac}|^2}{2}\; dt =\unm T_+ +\unc\lim_{t\to T_+} R -\unc R_0.
$$

\begin{proposition}\label{acRF-reg}
If a left-invariant acRF-solution $g(t)$ on a unimodular Lie group has a finite-time singularity at $T_+$ (resp. $T_-$), then
$$
\lim_{t\to T_+} R(g(t)) =\infty \qquad \left(\mbox{resp.}\quad \lim_{t\to T_-} R(g(t)) =-\infty\right).
$$
\end{proposition}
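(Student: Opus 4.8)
The plan is to combine the general regularity statement in Corollary \ref{cor-reg} with the monotonicity of the scalar curvature along an acRF-solution. First I would record the two ingredients. Along acRF one has $P\equiv 0$ and $Q^{ac}=\Ricci^{ac}$, so by Proposition \ref{eqs}, (ii), in the unimodular case $\ddt R = 2|\Ricci^{ac}|^2\geq 0$; hence $R(g(t))$ is non-decreasing and the one-sided limits $\lim_{t\to T_\pm}R(g(t))$ exist in $[-\infty,+\infty]$. On the other hand, since $P\equiv 0$, Corollary \ref{cor-reg} applied to the $(p,q)$-flow with $p\equiv 0$, $q=\ricac$ (for which $\omega(t)\equiv\omega_0$, so a finite-time singularity of $g(t)$ is a finite-time singularity of $(\omega(t),g(t))$) gives $\int_0^{T_+}|\Ricci^{ac}|\,dt=\infty$ when $T_+<\infty$, and $\int_{T_-}^0|\Ricci^{ac}|\,dt=\infty$ when $T_->-\infty$.

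For the forward singularity I would then use the elementary inequality $x\leq\tfrac12(1+x^2)$ together with $\ddt R=2|\Ricci^{ac}|^2$, exactly as in the display preceding the statement, to obtain $\infty=\int_0^{T_+}|\Ricci^{ac}|\,dt\leq\unm T_+ + \unc\lim_{t\to T_+}R(g(t)) - \unc R(g_0)$. Since $T_+$ and $R(g_0)$ are finite, this forces $\lim_{t\to T_+}R(g(t))=+\infty$.

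The backward case is symmetric: integrating the same bound over $[T_-,0]$ and using $\int_{T_-}^0|\Ricci^{ac}|^2\,dt=\unm\bigl(R(g_0)-\lim_{t\to T_-}R(g(t))\bigr)$ gives $\infty=\int_{T_-}^0|\Ricci^{ac}|\,dt\leq -\unm T_- + \unc R(g_0) - \unc\lim_{t\to T_-}R(g(t))$, which forces $\lim_{t\to T_-}R(g(t))=-\infty$.

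I do not expect a genuine obstacle here: the argument is just Corollary \ref{cor-reg} fed into the monotonicity formula for $R$. The only points requiring a little care are to note at the outset that $R(g(t))$ is monotone (so that the limits exist and the chains of inequalities are meaningful) and to check that the hypotheses of Corollary \ref{cor-reg} are met, i.e. that a finite-time singularity of the metric $g(t)$ alone already counts as a singularity of the $(p,q)$-flow pair since $\omega$ stays fixed.
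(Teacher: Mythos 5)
Your argument is correct and is essentially the paper's own proof: the displayed computation before the proposition is exactly your chain $\int_0^{T_+}|\Ricci^{ac}|\,dt \leq \unm T_+ + \unc\lim_{t\to T_+}R - \unc R_0$, fed by $\ddt R = 2|\Ricci^{ac}|^2$ from Proposition \ref{eqs}, (ii), and by Corollary \ref{cor-reg} with $P\equiv 0$. The backward case is handled by the same symmetry you describe, so there is nothing to add.
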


Long-time existence therefore follows for those Lie groups with the property that any left-invariant metric has non-positive
scalar curvature (see \cite{Brr}).

\begin{corollary}\label{acRF-reg2}
Any left-invariant acRF-solution $g(t)$ on a unimodular Lie group covered by the euclidean space is immortal (i.e. $T_+=\infty$).  In particular, this holds for unimodular solvable Lie groups, and consequently for invariant acRF-solutions on any compact solvmanifold.
\end{corollary}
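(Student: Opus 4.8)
The plan is to argue by contradiction, combining Proposition \ref{acRF-reg} with the sign of the scalar curvature of left-invariant metrics on the Lie groups in question. First I would suppose that $T_+<\infty$ for some left-invariant acRF-solution $g(t)$ on a unimodular Lie group $G$ whose universal cover is diffeomorphic to euclidean space. Proposition \ref{acRF-reg} then yields $\lim_{t\to T_+}R(g(t))=\infty$, so in particular $R(g(t))>0$ for all $t$ sufficiently close to $T_+$. But each $g(t)$ is again a left-invariant metric on $G$, and by the fact quoted just before the corollary (see \cite{Brr}) every left-invariant metric on a unimodular Lie group covered by euclidean space has non-positive scalar curvature. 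This contradiction forces $T_+=\infty$, i.e. the solution is immortal.

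Next I would dispatch the first assertion of the ``in particular'' clause. A simply connected solvable Lie group is diffeomorphic to $\RR^n$, so every unimodular solvable Lie group is covered by euclidean space (and passing to the universal cover affects neither the left-invariance nor the scalar curvature of a left-invariant metric), so the previous paragraph applies verbatim.

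For compact solvmanifolds, I would write $M=G/\Gamma$ with $\Gamma$ a cocompact discrete subgroup of a simply connected solvable Lie group $G$. The existence of such a $\Gamma$ forces $G$ to be unimodular, while $G$ is diffeomorphic to $\RR^n$ as above. By the discussion in Section \ref{sec-LG}, any invariant acRF-solution on $M$ is obtained by pulling down the corresponding left-invariant acRF-solution on $G$, and the two solutions share the same maximal interval of existence; since the solution on $G$ is immortal by the first part, so is the one on $M$.

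The only genuinely non-formal ingredient is the non-positivity of $R$ for left-invariant metrics on Lie groups covered by euclidean space, which I am taking from \cite{Brr}; granting this, the corollary is an immediate consequence of Proposition \ref{acRF-reg}. The remaining work is the bookkeeping reduction from $M$ to its solvable universal cover --- namely that admitting a lattice forces unimodularity, that a simply connected solvable group is diffeomorphic to $\RR^n$, and that the maximal existence times agree under push-down --- all of which are standard.
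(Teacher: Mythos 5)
Your argument is correct and is essentially the paper's own: the paper deduces the corollary by combining Proposition \ref{acRF-reg} (the scalar curvature must blow up to $+\infty$ at a finite-time singularity) with the fact from \cite{Brr} that every left-invariant metric on a unimodular Lie group covered by euclidean space has non-positive scalar curvature. Your additional bookkeeping for the solvable and solvmanifold cases matches the reductions already set up in Section \ref{sec-LG}.
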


Recall that a Lie group is covered by the euclidean space if and only if the semisimple part of its Lie algebra is a sum of ideals all isomorphic to $\slg_2(\RR)$.

In the nilpotent case, the acRF has been studied in \cite{minimal}, where besides the uniqueness of acRF-solitons up to isometry and scaling, the following characterizations were given:

\begin{itemize}
\item $g$ is an acRF-soliton.

\item $g$ minimizes the functional $|\Ricac|/|R|$ among all left-invariant metrics on the given Lie group.

\item $\Ricac(g)=cI+D$ for some $c\in\RR$, $D\in\Der(\ggo)$.
\end{itemize}

In \cite{Frn}, left-invariant acRF-solitons on all symplectic structures on $2$ and $3$-step $6$-dimensional nilpotent Lie groups have been classified.

Concerning regularity and convergence, the following results can be proved in much the same way as in \cite{nilricciflow}, where the Ricci flow on nilmanifolds was studied (replace reference [19] by \cite{minimal} to get (6.1) and Theorem 6.2 for $\Ricac$ in \cite{nilricciflow}).  Let $g(t)$ be a left-invariant acRF-solution on a symplectic nilpotent Lie group $(N,\omega)$ of dimension $2n$, then the following holds:

\begin{itemize}
\item $g(t)$ is defined for $t\in[0,\infty)$ and there exists a constant $C_n$ that only depends on $n$ such that
$|\Riem(g(t))|\leq C_n/t$ for all $t\in(0,\infty)$; in particular, $g(t)$ is a type-III solution.

\item The functional $|\Ricac|/|R|$ is strictly decreasing along $g(t)$ for all $t$, unless $g_0$ is an acRF-soliton.

\item The metrics $g(t)$ converge in the pointed sense to a flat metric, as $t\to\infty$.

\item The bracket flow $\ddt\mu=\delta_\mu(\Ricci_\mu^{ac})$ is precisely the negative gradient flow of the functional $|\Ricci^{ac}|^2$, and consequently, $\mu/|\mu|$ always converges to a unique Lie bracket $\lambda$ (no chaos).  Moreover, it follows from \cite[Theorem 6.4]{Jbl} that $\lambda$ is isomorphic to $\mu_0$ if and only if $G_{\mu_0}$ admits an acRF-soliton.

\item If $g_0$ is nonflat, then the metrics $|\! R(g(t)) \!| g(t)$ converges in the pointed sense to a nonflat acRF-soliton $g_\infty$, as $t\to\infty$.  The metric $g_\infty$ is isometric to a left-invariant metric on some nilpotent Lie group $\widetilde{N}$, though possibly non-isomorphic to $N$.
\end{itemize}

\subsection{Examples}
In order to illustrate many of the aspects of the approach proposed in the paper, we next give two simple examples in detail for SCF.

\begin{example}\label{n4}
Let $\mu=\mu_{a,b}$ be the nilpotent Lie bracket on $\ggo=\RR^4$ defined by
$$
\mu(e_1,e_2)=ae_3, \qquad \mu(e_1,e_3)=be_4.
$$
Consider $g_0$, the canonical inner product, and
$$
\omega_0=e^1\wedge e^4+e^2\wedge e^3, \qquad
J_0=\left[\begin{smallmatrix} &&0&-1\\ &&-1&0\\ 0&1&&\\ 1&0&&
\end{smallmatrix}\right],
$$
where $\{ e_i\}$ denotes the canonical basis of $\RR^4$ and $\{ e^i\}$ its dual basis.

It is easy to check that $(\mu,\omega_0,g_0)$ is an almost-K\"ahler structure, whith Chern-Ricci form is given by $p=-\tfrac{ab}{2}e^1\wedge e^2$, and
$$
P=\unm\left[\begin{smallmatrix} &&0&0\\ &&0&0\\ ab&0&&\\ 0&-ab&&
\end{smallmatrix}\right], \qquad
\Ricac=-\unc\left[\begin{smallmatrix} a^2+2b^2&&&\\ &2a^2-b^2&&\\ &&-2a^2+b^2&\\ &&&-a^2-2b^2
\end{smallmatrix}\right].
$$
If exactly one of $a,b$ vanishes, then $(G_\mu,\omega_0,g_0)$ is the simply connected cover of the Kodaira-Thurston manifold.  This Lie group, which is isomorphic to $\RR\times H_3$, where $H_3$ denotes the Heisenberg group, admits a unique almost-K\"ahler structure up to equivalence and scaling, and so such structure is necessarily a $(p,q)$-soliton for any $(p,q)$-flow.  This can also be checked by using \eqref{SCF-algsol}.

We can therefore assume that $a,b>0$, up to equivalence.  By using that the space of derivations of $\mu$ is given by
$$
\Der(\mu)=\left\{\left[\begin{smallmatrix} \alpha&0&0&0\\ \ast&\beta&0&0\\ \ast&\gamma&\alpha+\beta&0\\ \ast&\ast&b\gamma/a&2\alpha+\beta
\end{smallmatrix}\right]:\alpha,\beta,\gamma\in\RR\right\},
$$
it is not hard to show that $\mu$ is a SCF-soliton if and only if $a=b$.  In that case, condition \eqref{SCF-algsol} holds:
$$
P+\Ricac=-\tfrac{5}{4}a^2I+\unm a^2 \left[\begin{smallmatrix} 1&&&\\ 0&2&&\\ 1&0&3&\\ 0&-1&0&4
\end{smallmatrix}\right] \in\RR I+\Der(\mu).
$$
Note that $P$ is always a derivation, and so actually condition \eqref{PQac} does hold.

It is straightforward to see that this family is invariant under the bracket flow, which is equivalent to the following ODE system for the variables $a(t),b(t)$:
$$
\left\{\begin{array}{l}
a'= -\tfrac{5}{4}a^3, \\ \\
b'= -\tfrac{5}{4}b^3.
\end{array}\right.
$$
This can be explicitly integrated as
$$
a(t)=\left(\tfrac{5}{2}t+\tfrac{1}{a_0^2}\right)^{-1/2}, \qquad  b(t)=\left(\tfrac{5}{2}t+\tfrac{1}{b_0^2}\right)^{-1/2},
$$
and thus all these SCF-solutions are immortal (i.e. $T_+=\infty$) and $\mu(t)\to 0$, as $t\to\infty$.  It follows from Section \ref{conver} that the original SCF-solution $(G_{\mu_0},\omega(t),g(t))$ converges to the K\"ahler euclidean space $(\RR^4,\omega_0,g_0)$ in the pointed sense, as $t\to\infty$.  Moreover, since
$$
\lim\limits_{t\to\infty} \mu(t)/|\mu(t)|=\lambda:=\mu_{1/2,1/2}, \qquad\forall a_0,b_0>0,
$$
we obtain pointed convergence of the (normalized) SCF-solution $(G_{\mu_0},c(t)\omega(t),c(t)g(t))$ toward the SCF-soliton $(G_\lambda,\omega_0,g_0)$, where  $c(t)=|\mu(t)|^2$.
\end{example}

The SCF-evolution of the structures in Example \ref{n4} have already been studied in \cite[Sections 3.1, 3.3]{Pk}, in the standard way.

\begin{example}\label{anna}
Let $\mu=\mu_{a,b}$ be the (non-unimodular) solvable Lie bracket on $\ggo=\RR^4$ defined by
$$
\mu(e_1,e_2)=-ae_2, \qquad \mu(e_1,e_3)=2ae_3, \qquad \mu(e_1,e_4)=ae_4, \qquad \mu(e_2,e_3)=be_4,
$$
and consider $g_0$ the canonical inner product,
$$
\omega_0=e^1\wedge e^3+e^2\wedge e^4, \qquad
J_0=\left[\begin{smallmatrix} &&-1&0\\ &&0&-1\\ 1&0&&\\ 0&1&&
\end{smallmatrix}\right]
$$
The almost-K\"ahler structure $(\mu,\omega_0,g_0)$ has $p=-a(4a+b) e^1\wedge e^3$, and
$$
P=-\unm a(4a+b)\left[\begin{smallmatrix} 1&&&\\ &0&&\\ &&1&\\ &&&0
\end{smallmatrix}\right], \qquad
\Ricac=-\unc(4a^2-b^2)\left[\begin{smallmatrix} 1&&&\\ &-2&&\\ &&-1&\\ &&&2
\end{smallmatrix}\right].
$$
For any $a,b\ne 0$, $\mu$ is isomorphic to the Lie algebra $\dg_{4,2}$, as denoted in \cite[Proposition 2.1]{Ovn}, and if in addition $b=2a$, then $(\mu_{a,2a},\omega_0,g_0)$ is equivalent (up to scaling) to the unique left-invariant strictly almost-K\"ahler structure on a $4$-dimensional Lie group having $\Ricac=0$ found in \cite{Fn}.  It is easy to see that this is the unique SCF-soliton among this family for $a,b\ne 0$. Note that actually condition \eqref{PQac} does hold:
$$
P=-3a^2I+3a^2\left[\begin{smallmatrix} 0&&&\\ &1&&\\ &&0&\\ &&&1
\end{smallmatrix}\right] \in\RR I+\Der(\mu_{a,2a}).
$$
It is straightforward to show that this family is invariant under the bracket flow, which is equivalent to the following ODE system for the variables $a(t),b(t)$:
$$
\left\{\begin{array}{l}
a'= (-9a^2+\unc b^2-2ab)a, \\ \\
b'= (-3a^2-\tfrac{5}{4}b^2-2ab)b.
\end{array}\right.
$$
By a standard qualitative analysis, we obtain as in the above example long-time existence for all these SCF-solutions and that $(G_{\mu_0},\omega(t),g(t))$ converges to $(\RR^4,\omega_0,g_0)$ in the pointed sense, as $t\to\infty$.  Furthermore,
$$
\lim\limits_{t\to\infty} \mu(t)/|\mu(t)|=\lambda:=\mu_{1,2}/\sqrt{20}, \qquad\forall a_0,b_0>0,
$$
and thus pointed convergence of $(G,c(t)\omega(t),c(t)g(t))$ toward the SCF-soliton $(G_\lambda,\omega_0,g_0)$ follows for $c(t)=|\mu(t)|^2$.
\end{example}

\section{Appendix: Chern-Ricci form}\label{chern}

Let $(M,\omega,g,J)$ be a $2n$-dimensional almost-hermitian manifold.  The {\it Chern connection} is the unique connection $\nabla$ on $M$ which is hermitian (i.e. $\nabla\omega=0$, $\nabla g=0$, $\nabla J=0$) and its torsion satisfies $T^{1,1}=0$.  In terms of the Levi Civita connection $D$ of $g$, the Chern connection is given by
$$
g(\nabla_XY,Z)=g(D_XY,Z)-\unm d\omega(JX,Y,Z) - g(X,N(Y,Z)),
$$
where $N(X,Y)=[JX,JY]-[X,Y]-J[JX,Y]-J[X,JY]$ is the Nijenhuis tensor (see e.g. \cite[(2.1)]{Vzz2}, \cite[(2.1)]{DsCVzz} and \cite[Section 2]{TstWnk} for different equivalent descriptions).  We note that $\nabla=D$ if and only if $(M,\omega,g,J)$ is K\"ahler.  In the almost-K\"ahler case, the above formula reduces to
$$
\nabla_XY=D_XY+\unm(D_XJ)JY.
$$
The {\it Chern-Ricci form} $p=p(\omega,g)$ is defined by
$$
p(X,Y)=\sum_{i=1}^{n} g(R(X,Y)e_i,Je_i) = \sqrt{-1} \sum_{i=1}^{n} g(R(X,Y)Z_i,Z_{\overline{i}}),
$$
where $R(X,Y)=\nabla_{[X,Y]} - [\nabla_X,\nabla_Y]$ is the curvature tensor of $\nabla$ and
$$
\{ e_1,\dots,e_n,Je_1,\dots,Je_n\}
$$
is a local orthonormal frame for $g$ with corresponding local unitary frame
$$
Z_i:=(e_i-\sqrt{-1} Je_i)/\sqrt{2}, \qquad Z_{\overline{i}}:=(e_i+\sqrt{-1} Je_i)/\sqrt{2}.
$$
The Chern-Ricci form is always closed and locally $p=d\theta$, where $\theta$ is the $1$-form given by
\begin{align*}
\theta(X)= & \sqrt{-1} \sum_{i=1}^n g(\nabla_XZ_i,Z_{\overline{i}}) \\
=& -\tfrac{1}{2}\sum_{i=1}^n g([X,e_i],Je_i) -g([X,Je_i],e_i) +g([JX,e_i],e_i) +g([JX,Je_i],e_i).
\end{align*}
(See \cite[(3.3)]{Vzz2}).  If $J$ is integrable, then $p$ is a $(1,1)$-form (see e.g. \cite[Section 2]{TstWnk}), and in the K\"ahler case $p$ equals the Ricci form $\ricci(J\cdot,\cdot)$.

By Chern-Weil theory $[p]=2\pi c_1(M,J)$, where $c_1(M,J)\in H^2(M,\RR)$ is the first Chern class (see e.g. \cite[Chapter 16]{Mrn} or \cite[Chapter 12]{KbyNmz} for further information).

The Chern-Ricci form $p$ of a left-invariant almost-hermitian structure $(\omega,g,J)$ on a Lie group with Lie algebra $\ggo$ is given by
\begin{equation}\label{CRform}
p(X,Y)=-\unm\tr{J\ad{[X,Y]}} + \unm\tr{\ad{J[X,Y]}}, \qquad\forall X,Y\in\ggo.
\end{equation}
(See \cite[Proposition 4.1]{Vzz2} or \cite{Pk}).  Remarkably, $p$ only depends on $J$.  If $P\in\End(\ggo)$ is the Chern-Ricci operator, i.e. $p=\omega(P\cdot,\cdot)$, then by \eqref{CRform} $P$ vanishes on the center of $\ggo$.

It follows from \cite[Proposition 4.2]{Vzz2} that $p$ vanishes under any of the following conditions:

\begin{itemize}
\item $J$ bi-invariant (i.e. $[J\cdot,\cdot]=J[\cdot,\cdot]$).

\item $J$ anti-bi-invariant (i.e. $[J\cdot,\cdot]=-J[\cdot,\cdot]$).

\item $J$ abelian (i.e. $[J\cdot,J\cdot]=[\cdot,\cdot]$) and $\ggo$ unimodular.
\end{itemize}

It is proved in \cite{CRF} that if $J$ is integrable and $\ggo$ is nilpotent, then $p=0$.

On the other hand, in the case when $\omega$ is closed, it is proved in \cite{Frn} that
$$
P=\ad{Z}+(\ad{Z})^{t_\omega} =\ad{Z}+J(\ad{Z})^tJ^{-1},
$$
where $Z\in\ggo$ is defined by $p(X,Y)=\omega(Z,[X,Y])$ for all $X,Y\in\ggo$, and furthermore, that $P$ is a nilpotent operator if $\ggo$ is unimodular.


\begin{thebibliography}{MMM}

\bibitem[BB]{Brr} {\sc L. B\'erard-Bergery}, Sur la courbure des m\'etriques riemanniennes invariantes
des groupes de Lie et des espaces homog\`enes, {\it Ann. Sci. \'Ec. Norm. Sup\'er.} (4),
{\bf 11}:4 (1978), 543-576.

\bibitem[B]{Bss} {\sc A. Besse}, Einstein manifolds, {\it Ergeb. Math.} {\bf 10} (1987), Springer-Verlag,
Berlin-Heidelberg.

\bibitem[CdS]{Cnn} {\sc A. Cannas da Silva}, Lectures on symplectic geometry, {\it Lect. Notes Math.} {\bf 1764} (2006).

\bibitem[DV]{DsCVzz} {\sc A. Di Scala, L. Vezzoni}, Chern-flat and Ricci-flat invariant almost Hermirian structures, {\it Ann. Glob. Anal. Geom.} {\bf 40} (2011), 21-45.

\bibitem[EFV]{EnrFnVzz} {\sc N. Enrietti, A. Fino, L. Vezzoni}, The pluriclosed flow on nilmanifolds and tamed symplectic flow, {\it J. Geom. Anal.}, in press (arXiv).

\bibitem[FC]{Frn} {\sc E. Fern\'andez-Culma}, Soliton almost K\"ahler structures on $6$-dimensional nilmanifolds for the symplectic curvature flow, preprint 2013 (arXiv).

\bibitem[F]{Fn} {\sc A. Fino}, Almost K\"ahler $4$-dimensional Lie groups with $J$-invariant Ricci tensor,  {\it Diff.  Geom. Appl.} {\bf 23} (2005), 26-37.

\bibitem[IJ]{IsnJck} {\sc J. Isenberg, M. Jackson}, Ricci flow of locally homogeneous geometries on closed manifolds, {\it J. Diff. Geom.} {\bf 35} (1992), 723-741.

\bibitem[J1]{Jbl} {\sc M. Jablonski}, Distinguished orbits of reductive groups, {\it Rocky Mount. J. Math.} {\bf 42} (2012), 1521-1549.

\bibitem[J2]{Jbl3}  \bysame, Homogeneous Ricci solitons, {\it J. reine angew. Math.}, in press.

\bibitem[J3]{Jbl2}  \bysame, Homogeneous Ricci solitons are algebraic, {\it Geom. Top.}, in press.

\bibitem[KN]{KbyNmz} {\sc S. Kobayashi, K. Nomizu}, Foundations of differential geometry, Vol. II, {\it Interscience}  (1969), Wiley, New York.

\bibitem[Lf]{Lfn} {\sc R. Lafuente}, Scalar curvature behavior of homogeneous Ricci flows, {\it J. Geom. Anal.}, in press.

\bibitem[LfL]{homRS} {\sc R. Lafuente, J. Lauret}, On homogeneous Ricci solitons, {\it Quart. J. Math.}, in press (arXiv).

\bibitem[L1]{minimal} {\sc J. Lauret}, A canonical compatible metric for geometric
structures on
nilmanifolds, {\it Ann. Global Anal. Geom.} {\bf 30} (2006), 107-138.

\bibitem[L2]{praga}  \bysame, Minimal metrics on nilmanifolds, Diff. Geom. and its Appl.,
proc. Conf. prague September 2004 (2005), 77-94 (arXiv).

\bibitem[L3]{nilricciflow}  \bysame, The Ricci flow for simply connected nilmanifolds, {\it Comm. Anal. Geom.} {\bf 19} (2011), 831-854.

\bibitem[L4]{spacehm}  \bysame, Convergence of homogeneous manifolds, {\it J. London Math. Soc.} {\bf 86} (2012), 701-727.

\bibitem[L5]{homRF}  \bysame, Ricci flow of homogeneous manifolds, {\it Math. Z.} {\bf 274} (2013), 373-403.

\bibitem[LRV]{CRF}  {\sc J. Lauret, E. Rodr\'\i guez-Valencia}, On the Chern-Ricci flow and its solitons for Lie groups, preprint 2013 (arXiv).

\bibitem[LW]{SCFmuA}  {\sc J. Lauret, C.E. Will}, Symplectic curvature flow on Lie groups, in preparation.

\bibitem[LeW]{LeWng} {\sc H-V Le, G. Wang}, Anti-complexified Ricci flow on compact symplectic manifolds, {\it J.
reine angew. Math.} {\bf 530} (2001), 17-31.

\bibitem[M]{Mrn} {\sc A. Moroianu}, Lectures on K\"ahler geometry, {\it London Math. Soc. Stud. Texts} {\bf 69}, Cambridge Univ. Press (2007).

\bibitem[McS]{McDSlm} {\sc D. McDuff, D. Salamon}, Introduction to symplectic topology, {\it Oxford Math. Monographs}, New York (1998).

\bibitem[O]{Ovn} {\sc G. Ovando}, Four dimensional symplectic Lie algebras, {\it Beitr. Alg. Geom.} {\bf 47} (2006), 419-434.

\bibitem[P]{Pk} {\sc J. Pook}, Homogeneous and locally homogeneous solutions to symplectic curvature flow, preprint 2012 (arXiv).

\bibitem[S]{Ssm} {\sc N. Sesum}, Curvature tensor under the Ricci flow, {\it Amer. J. Math.} {\bf 127} (2005), 1315-1324.

\bibitem[SW]{SngWnk} {\sc J. Song, B. Weinkove}, Lecture notes on the K\"ahler-Ricci flow, preprint 2012 (arXiv).

\bibitem[ST1]{StrTn4} {\sc J. Streets, G. Tian}, A parabolic flow of pluriclosed metrics, {\it Int. Math. Res. Not.} {\bf 16} (2010), 3101-3133.

\bibitem[ST2]{StrTn1} \bysame, Hermitian curvature flow, {\it J. Eur. Math. Soc.} {\bf 13} (2011), 601-634.

\bibitem[ST3]{StrTn3} \bysame, Regularity results for pluriclosed flow, {\it Geom. Top.}, in press.

\bibitem[ST4]{StrTn2} \bysame, Symplectic curvature flow, {\it J. reine angew. Math.}, in press.

\bibitem[TZ]{TnZhn} {\sc G. Tian, Z. Zhang}, On the K\"ahler-Ricci flow on projective manifolds of general type, {\it Chinese Ann. Math.} {\bf 27} (2006), 179-192.

\bibitem[TW]{TstWnk} {\sc V. Tosatti, B. Weinkove}, On the evolution of a hermitian metric by its Chern-Ricci form, preprint 2012 (arXiv).

\bibitem[V1]{Vzz1} {\sc L. Vezzoni}, On hermitian curvature flow on almost complex manifolds, {\it Diff. Geom. Appl.} {\bf 29} (2011), 709-722.

\bibitem[V2]{Vzz2} \bysame, A note on canonical Ricci forms on $2$-step nilmanifolds, {\it Proc. Amer. Math. Soc.} {\bf 141} (2013), 325-333.
\end{thebibliography}
\end{document}